\documentclass{article}
\usepackage{mathrsfs}
\usepackage{bbm}
\usepackage{amssymb,amsmath,amsthm,cases}
\usepackage[noadjust]{cite}
\allowdisplaybreaks
\begin{document}
\title {Orthogonalization in Clifford Hilbert modules and applications}
\author{Jinxun Wang\thanks{\scriptsize School of Mathematics and Statistics, Guangdong University of Foreign Studies,
Guangzhou 510006, China. E-mail: wjx@gdufs.edu.cn}, Tao Qian\thanks{\scriptsize Macau Institute of Systems Engineering,
Macau University of Science and Technology, Macau, China. E-mail:
tqian@must.edu.mo}}
\date{}
\maketitle
\newtheorem{defi}{Definition}[section]
\newtheorem{theo}{Theorem}[section]
\newtheorem{pro}{Proposition}[section]
\newtheorem{lem}{Lemma}[section]
\newtheorem{coro}{Corollary}[section]
\newtheorem{exam}{Example}[section]
\newtheorem{rem}{Remark}[section]
\def\D{\bf D}

\noindent\textbf{Abstract:}
We prove that the Gram--Schmidt orthogonalization process can be carried out in Hilbert modules over Clifford algebras, in spite of the un-invertibility and the un-commutativity of general Clifford numbers. Then we give two crucial applications of the orthogonalization method. One is to give a constructive proof of existence of an orthonormal basis of the inner spherical monogenics of order $k$ for each $k\in\mathbb{N}.$ The second is to formulate the Clifford Takenaka--Malmquist systems, or in other words, the Clifford rational orthogonal systems, as well as define Clifford Blaschke product functions, in both the unit ball and the half space contexts. The Clifford TM systems then are further used to establish an adaptive rational approximation theory for $L^2$ functions on the sphere and in $\mathbb{R}^m.$ 

\vskip 0.3cm
\noindent\textbf{Keywords:} Takenaka--Malmquist system, adaptive approximation, Clifford algebra, monogenic Hardy space
\vskip 0.3cm
\noindent\textbf{MSC2020:} 15A66, 30G35

\section{Introduction}
Due to importance of orthonormal bases in both theoretical analysis and real life applications, for a system of functions, $\mathscr{F},$ in a Hilbert space, the questions of existence, and explicit composition if existing, of an orthogonal system equivalent to $\mathscr{F}$ naturally arise. If the functions in $\mathscr{F}$ are complex-valued, existence of an orthonormal basis of $\mathscr{F}$ is guaranteed by the Gram--Schmidt (GS) orthogonalization process, but in the case of Clifford number-valued functions it is not obvious since Clifford algebras are non-commutative and Clifford numbers are usually un-invertible. In Clifford analysis there is an example: How to find an orthonormal basis for the Fueter polynomials of degree $k$ (i.e., the inner spherical monogenics of order $k$)? This problem appears because there are more than one Fueter polynomials of degree $k$ and they are not mutually orthogonal. In \cite{DSS} the existence was proved by induction on dimensions, but no explicit forms were given. In \cite{BGLS} an explicit form was constructed in three dimensions using the Gelfand--Tsetlin bases, but the construction is too complicated for higher dimensions. The mentioned construction is also applicable to the Hermitean Clifford analysis and to some other systems as well (\cite{BDLS,DLS}).

In this paper we show that the GS orthogonalization process can be applied to general Clifford module Hilbert spaces. This is through proving that the orthogonal projection of a function onto the subspace spanned by some other functions exists. We present here a direct construction of an orthonormal basis for a system of Clifford number-valued functions. When we consider some commonly familiar functions like Fueter polynomials and parameterized Szeg\"{o} kernel functions, the construction gives rise to concrete expressions involving inverse of the Clifford-valued inner product. As applications of the obtained fundamental orthogonalization result, we give a constructive proof of existence of an orthonormal basis of the inner spherical monogenics of order $k$ for each $k\in\mathbb{N}$, and generalize the Takenaka--Malmquist (TM) systems into higher dimensions. We note that the TM, or the rational orthogonal system, in one complex variable has attracted, and been attracting as well, considerable interest among analysts due to its theoretical involvements, through the Beurling theorem for instance, and applications. Extending TM systems to higher dimensional Clifford algebras was the main motivation of this study.

An adaptive TM system approximation theory for one complex variable has recently been established with applications (\cite{Qian1,QW}). The type of adaptive approximation has been generalized to some several complex variables and matrix-valued contexts with applications, see the works \cite{ACQS1,ACQS2,QWZ,QTW} by Alpay et al and Qian et al. As a crucial technical method the obtained Clifford orthogonalization process enables us to extend the one complex variable adaptive approximation theory to Clifford algebra (see \S 5), that as a consequence induces rational approximation to multi-real-variate functions through the Clifford imbedding.

The paper is organized as follows. In Section 2 we review Clifford algebra and Clifford analysis. In Section 3 we study orthogonalization of function systems in the right $\mathscr{A}_m$-module inner product space. In Section 4 we give definitions for TM systems and Blaschke products in general higher dimensions. In the last section we study adaptive approximation by Clifford TM systems in the unit ball and half space in higher dimensions.

\section{Preliminaries}
In this paper we work on the real Clifford algebra $\mathscr{A}_m$ that generated by an orthonormal basis $\{e_1,\ldots,e_m\}$ of $\mathbb{R}^m$ with
the (non-commutative) multiplication rule
$$e_ie_j+e_je_i=-2\delta_{ij}, i,j=1,\ldots,m,$$
where $\delta_{ij}$ equals 1 if $i=j$ and 0 otherwise. Each element $x$ in $\mathscr{A}_m$ is of the form
$$x=\sum_{T\in\mathcal{P}N}x_Te_T,$$
where $x_T\in \mathbb{R}$, $e_T=e_{i_1,\ldots,i_l}:=e_{i_1}e_{i_2}\cdots e_{i_l}$ is the basic element of $\mathscr{A}_m$, $T=\{i_1,\ldots,i_l\}$, $1\leq i_1<\ldots<i_l\leq m$, $\mathcal{P}N$ is the set consisting of all the ordered subsets of $\{1,\ldots,m\}$. In addition we set $x_\emptyset=x_0$, $e_\emptyset=e_0$, $e_0$ is identified with the multiplication unit ``$1$''. The multiplication of Clifford numbers is determined by the multiplication of the basic elements through linearity and the law of distribution. Let $e_A, e_B$ be any two basic elements in $\mathscr{A}_m$, their multiplication is defined by
$$e_Ae_B=(-1)^{\#(A\bigcap B)}(-1)^{p(A,B)}e_{A\triangle B},$$
where $p(A,B)=\sum_{j\in B}p(A,j)$, $p(A,j)=\#\{i\in A: i>j\}$, $A\triangle B=(A\setminus B)\bigcup(B\setminus A)$ is the symmetric difference of $A$ and $B$. Together with the multiplication, $\mathscr{A}_m$ is an associative algebra of dimension $2^m$.

For $x=\sum_Tx_Te_T\in\mathscr{A}_m$, we call $x_0$ the real part or scalar part of $x$, denote it by ${\rm Sc}~x$.
${\rm NSc}~x:=x-{\rm Sc}~x$ is then the non-scalar part of $x$. The norm and the conjugate of $x$ are defined by $|x|=(\sum_Tx_T^2)^{1/2}$ and $\overline{x}=\sum_Tx_T\overline{e_T}$ respectively, where
$\overline{e_T}=\overline{e_{i_l}}\cdots\overline{e_{i_2}}\,\overline{e_{i_1}}$, and $\overline{e_0}=e_0$, $\overline{e_i}=-e_i$ for $i\neq 0$. For any $x, y, z\in\mathscr{A}_m$, there holds $\overline{xy}=\overline{y}\, \overline{x}$, $(xy)z=x(yz)$, and $|xy|\leq 2^{m/2}|x||y|$. The real numbers, complex numbers and quaternions are special cases of Clifford algebra, i.e., we have $\mathscr{A}_0=\mathbb{R}$, $\mathscr{A}_1=\mathbb{C}$, and $\mathscr{A}_2=\mathbb{H}$.

For any $x\in\mathscr{A}_m$, we have ${\rm Sc}(\overline{x}x)={\rm Sc}(x\overline{x})=|x|^2$. If $x\in\mathscr{A}_m$ is of vector form, i.e., $x=\sum_{i=0}^mx_ie_i\in\mathbb{R}^{m+1}$, then obviously $\overline{x} x=x \overline{x}=|x|^2$, so, in the case, the inverse of $x$ is given by $x^{-1}=\overline{x}/|x|^2$ when $x\neq 0$. But for a general Clifford number $x$, the inverse of $x$ may not exist. That is to say, the Clifford algebra $\mathscr{A}_m$ is not a division algebra. Here we give a criterion for a Clifford number being invertible or not.

\begin{pro}\label{criterion}
Let $a\in\mathscr{A}_m$, the following conclusions are equivalent:
\begin{enumerate}
\item The equation $ax=0$ (or $xa=0$) has only zero solution $x=0$.
\item $a$ is invertible, i.e., there exists a unique $b\in\mathscr{A}_m$ such that $ab=ba=1$.
\item there exists $b\in\mathscr{A}_m$ such that $ab=1$ (or $ba=1$).
\end{enumerate}
\end{pro}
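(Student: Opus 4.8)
The plan is to reduce the whole proposition to elementary linear algebra on the $2^m$-dimensional real vector space $\mathscr{A}_m$, using the left-multiplication operator. For $a\in\mathscr{A}_m$ let $L_a\colon\mathscr{A}_m\to\mathscr{A}_m$ denote the $\mathbb{R}$-linear map $L_a(x)=ax$; by associativity and the presence of the unit we have $L_aL_b=L_{ab}$ and $L_1=\mathrm{id}$. I would first dispatch the easy implications. For $(2)\Rightarrow(1)$: if $ba=ab=1$ and $ax=0$, then $x=(ba)x=b(ax)=0$, so only the trivial solution occurs. For $(1)\Rightarrow(3)$: condition $(1)$ says $\ker L_a=\{0\}$, i.e.\ $L_a$ is injective; since $\dim_{\mathbb R}\mathscr{A}_m=2^m<\infty$, $L_a$ is then bijective, hence surjective, so $1$ has a preimage $b$ with $ab=1$, which is $(3)$.

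The substantive step is $(3)\Rightarrow(2)$. Starting from $ab=1$, we get $L_aL_b=L_{ab}=L_1=\mathrm{id}$, so $L_a$ is surjective and $L_b$ is injective; by finite-dimensionality both are bijective. Composing $L_aL_b=\mathrm{id}$ on the left with $L_b^{-1}$... more simply, from $L_a$ bijective and $L_aL_b=\mathrm{id}$ one gets $L_b=L_a^{-1}$, hence $L_bL_a=\mathrm{id}$; evaluating at $1$ yields $ba=1$. Thus $ab=ba=1$, and uniqueness is the classical one-line argument: if also $ab'=b'a=1$ then $b'=(ba)b'=b(ab')=b$. This gives the cycle $(1)\Leftrightarrow(2)\Leftrightarrow(3)$ for the stated (non-parenthetical) versions.

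Finally, to handle the parenthetical alternatives ($xa=0$, $ba=1$), I would invoke that the conjugation $x\mapsto\overline{x}$ is an $\mathbb{R}$-linear involutive bijection of $\mathscr{A}_m$ satisfying $\overline{xy}=\overline{y}\,\overline{x}$. Hence $xa=0\iff\overline{a}\,\overline{x}=0$, and since $x\mapsto\overline{x}$ is onto, ``$xa=0$ has only the zero solution'' is equivalent to ``$\overline{a}y=0$ has only the zero solution'', which by the part already proved is equivalent to invertibility of $\overline{a}$; and $a$ is invertible iff $\overline{a}$ is invertible (conjugating $ab=ba=1$ gives $(\overline{a})^{-1}=\overline{a^{-1}}$). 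Likewise $ba=1\iff\overline{a}\,\overline{b}=1$. So each parenthetical statement is equivalent to its non-parenthetical counterpart, completing the proof. I do not expect a genuine obstacle here: the only hypothesis doing real work is that $\mathscr{A}_m$ is finite-dimensional, which via the rank–nullity theorem for $L_a$ forces a one-sided inverse to be two-sided — the point that would fail in an infinite-dimensional algebra.
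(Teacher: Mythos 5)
Your proof is correct and rests on the same essential idea as the paper's: view left multiplication $L_a$ as a linear operator (equivalently, a $2^m\times2^m$ real matrix) on the finite-dimensional real vector space $\mathscr{A}_m$, so that injectivity, surjectivity, and invertibility coincide. The paper phrases this explicitly in matrix form and handles the $ba=1$ direction by symmetry rather than via the operator identity $L_b=L_a^{-1}$ and conjugation, but these are cosmetic differences in an otherwise identical argument.
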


\begin{proof}
$(1)\Rightarrow(2)$: Note that the equation $ax=0$ can be written in the matrix form $AX=0$, where $A$ is a $2^m\times 2^m$ matrix associated to $a$, $X=(x_0,x_1,\ldots)^\top$ is the column vector whose components correspond to those of its algebraic representation. From this viewpoint, $ax=0$ has only zero solution $x=0$ means that the linear system of equations $AX=0$ has only zero solution $X=0$. So the matrix $A$ is invertible, and the equation $AX=(1,0,\ldots,0)^\top$ has a unique solution, given by $X=A^{-1}(1,0,\ldots,0)^\top$, which also gives the unique $b\in\mathscr{A}_m$ such that $ab=1$. To prove $ba=1$, now consider the equation $xa=0$, we get $x=xab=0$, so $xa=0$ has only zero solution $x=0$, similarly we get a unique $c\in\mathscr{A}_m$ such that $ca=1$, and $c=cab=b$, hence $ba=1$.

$(2)\Rightarrow(3)$: It is obvious.

$(3)\Rightarrow(1)$: Similar to the proof of $(1)\Rightarrow(2)$.
\end{proof}

Clifford analysis was founded by F. Brackx, R. Delanghe and F. Sommen et al (\cite{BDS}). As a generalization of complex analysis and quaternionic analysis into higher-dimensional spaces, Clifford analysis is a theory on Clifford monogenic functions. A function $f=\sum_{T\in\mathcal{P}N}f_Te_T$, defined on an open subset $\Omega$ of $\mathbb{R}^{m+1}$, taking values in $\mathscr{A}_m$, is said to be left monogenic on $\Omega$ if it satisfies the generalized Cauchy--Riemann equation
$$Df=\sum_{i=0}^me_i\frac{\partial f}{\partial x_i}=\sum_{i=0}^m\sum_{T\in\mathcal{P}N}\frac{\partial f_T}{\partial x_i}e_ie_T=0$$
for all $x\in\Omega$, where the Dirac operator $D$ is defined by
$$D=\frac{\partial}{\partial x_0}+\nabla=\sum_{i=0}^me_i\frac{\partial}{\partial x_i}.$$
If $f$ is left monogenic, then $\triangle f=\overline{D}(Df)=0$, so $f$ is harmonic. The set of all left monogenic functions on $\Omega$ constitutes a right $\mathscr{A}_m$-module.

If $f$ is left monogenic on $\Omega$ and continuous on $\overline{\Omega}$, then there holds Cauchy's integral formula
$$f(x)=\frac{1}{\omega_{m}}\int_{y\in\partial\Omega}E(y-x)n(y)f(y)dS,\quad x\in\Omega,$$
where $E(x)=\frac{\overline{x}}{|x|^{m+1}}$ is the Cauchy kernel, $\omega_m=2\pi^{\frac{m+1}{2}}/\Gamma(\frac{m+1}{2})$ is the
area of the unit sphere in $\mathbb{R}^{m+1}$, $n(y)$ is the outward-pointing unit normal vector and $dS$ is the surface area element on $\partial\Omega$.

For right monogenic functions there is a parallel theory.

\section{Orthogonalization in Hilbert modules over Clifford algebras}
In this section we discuss the orthogonalization problem of a right $\mathscr{A}_m$-module inner product space (for the case of left $\mathscr{A}_m$-modules one can similarly formulate). First we give some definitions (cf. \cite{BDS}).
\begin{defi}
A space $\mathscr{H}$ is called a right $\mathscr{A}_m$-module if the following conditions are fulfilled:
\begin{enumerate}
  \item $(\mathscr{H}, +)$ is an abelian group.
  \item A multiplication $(f, \lambda)\rightarrow f\lambda$ from $\mathscr{H}\times\mathscr{A}_m$ to $\mathscr{H}$ is defined such that for all $\lambda, \mu\in\mathscr{A}_m$ and $f, g\in\mathscr{H}$ there holds
    \begin{enumerate}
    \item[(1)] $f(\lambda+\mu)=f\lambda+f\mu$.
    \item[(2)] $f(\lambda\mu)=(f\lambda)\mu$.
    \item[(3)] $(f+g)\lambda=f\lambda+g\lambda$.
    \item[(4)] $fe_0=f$.
    \end{enumerate}
\end{enumerate}
\end{defi}

\begin{defi}
A space $\mathscr{H}$ is called a right $\mathscr{A}_m$-module normed space if the following conditions are fulfilled:
\begin{enumerate}
  \item $\mathscr{H}$ is a right $\mathscr{A}_m$-module.
  \item A norm $\|\cdot\|$ is defined on $\mathscr{H}$ such that
    \begin{enumerate}
    \item[(1)] $\|f\|\geq 0$ for all $f\in\mathscr{H}$, and $\|f\|=0$ if and only if $f=0$.
    \item[(2)] There is a real positive constant $C$ such that $\|f\lambda\|\leq C|\lambda|\|f\|$ for all $\lambda\in\mathscr{A}_m$, $f\in\mathscr{H}$, and $\|f\lambda\|=|\lambda|\|f\|$ for all $\lambda\in\mathbb{R}$, $f\in\mathscr{H}$.
    \item[(3)] $\|f+g\|\leq\|f\|+\|g\|$ for all $f, g\in\mathscr{H}$.
    \end{enumerate}
\end{enumerate}
\end{defi}

\begin{defi}
A space $\mathscr{H}$ (in which the element is also named ``function'') is called a right $\mathscr{A}_m$-module inner product space if the following conditions are fulfilled:
\begin{enumerate}
  \item $\mathscr{H}$ is a right $\mathscr{A}_m$-module.
  \item An inner product $(f, g)\rightarrow\langle f, g\rangle$ from $\mathscr{H}\times\mathscr{H}$ to $\mathscr{A}_m$ is defined such that for all $\lambda, \mu\in\mathscr{A}_m$ and $f, g, h\in\mathscr{H}$ there holds
    \begin{enumerate}
    \item[(1)] $\langle f, g\rangle=\overline{\langle g, f\rangle}$.
    \item[(2)] $\langle f\lambda+g\mu, h\rangle=\langle f, h\rangle\lambda+\langle g, h\rangle\mu$.
    \item[(3)] ${\rm Sc}\langle f, f\rangle\geq 0$, and ${\rm Sc}\langle f, f\rangle=0$ if and only if $f=0$.
    \item[(4)] $|{\rm Sc}\langle f, g\rangle|\leq\sqrt{{\rm Sc}\langle f, f\rangle}\sqrt{{\rm Sc}\langle g, g\rangle}$.
    \end{enumerate}
\end{enumerate}
\end{defi}

We have
\begin{pro}\label{IPS1}
Let $\mathscr{H}$ be a right $\mathscr{A}_m$-module inner product space, then for any $f, g\in\mathscr{H}$ we have
$$|\langle f, g\rangle|\leq 2^{\frac{m}{2}}\sqrt{{\rm Sc}\langle f, f\rangle}\sqrt{{\rm Sc}\langle g, g\rangle}.$$
In particular, $$|\langle f, f\rangle|\leq 2^{\frac{m}{2}}{\rm Sc}\langle f, f\rangle.$$
\end{pro}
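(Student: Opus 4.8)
The plan is to apply the positivity and scalar Cauchy--Schwarz of axioms~(3)--(4) not to the pair $(f,g)$ directly, but to $(f,ga)$ where $a:=\langle f,g\rangle$; the point of this ``twist'' is that $\langle f,ga\rangle$ then has scalar part exactly $|a|^2$. Apart from the Clifford identities recalled in Section~2, the only extra ingredient I need is the elementary estimate
\[
|{\rm Sc}(xy)|\leq|x|\,|y|\qquad(x,y\in\mathscr{A}_m).
\]
This holds because the $\mathbb{R}$-bilinear form $(u,v)\mapsto{\rm Sc}(\overline{u}\,v)$ is symmetric and has quadratic form ${\rm Sc}(\overline{u}u)=|u|^2$, hence is the standard inner product on $\mathscr{A}_m\cong\mathbb{R}^{2^m}$, so $|{\rm Sc}(\overline{u}v)|\leq|u||v|$; writing $x=\overline{\overline{x}}$ and using $|\overline{x}|=|x|$ then gives the displayed inequality. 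I note this is strictly sharper than what $|{\rm Sc}(\cdot)|\leq|\cdot|$ combined with $|xy|\leq 2^{m/2}|x||y|$ would yield, and the sharpness is exactly what makes the final constant $2^{m/2}$ rather than $2^{m}$.

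Fix $f,g\in\mathscr{H}$, put $a=\langle f,g\rangle$, and assume $a\neq0$ (otherwise the claim is trivial). A direct computation from axioms~(1)--(2), associativity, $\overline{xy}=\overline{y}\,\overline{x}$, and $\overline{\langle g,g\rangle}=\langle g,g\rangle$ yields
\[
\langle f,ga\rangle=\overline{a}\,a,\qquad\langle ga,ga\rangle=\overline{a}\,\langle g,g\rangle\,a,
\]
so ${\rm Sc}\langle f,ga\rangle={\rm Sc}(\overline{a}a)=|a|^2$, while (using ${\rm Sc}\langle ga,ga\rangle\geq0$ from~(3), then $|{\rm Sc}(xy)|\leq|x||y|$ and $|xy|\leq 2^{m/2}|x||y|$)
\[
{\rm Sc}\langle ga,ga\rangle={\rm Sc}\big(\overline{a}\,(\langle g,g\rangle a)\big)\leq|a|\,|\langle g,g\rangle a|\leq 2^{m/2}|a|^2|\langle g,g\rangle|.
\]
Substituting these into axiom~(4), $|{\rm Sc}\langle f,ga\rangle|\leq\sqrt{{\rm Sc}\langle f,f\rangle}\,\sqrt{{\rm Sc}\langle ga,ga\rangle}$, and cancelling one power of $|a|>0$, I get the intermediate inequality
\[
|\langle f,g\rangle|\leq 2^{m/4}\,\sqrt{{\rm Sc}\langle f,f\rangle}\,\sqrt{|\langle g,g\rangle|}.
\]

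The final step is a bootstrap on this intermediate inequality. Taking $f=g$ in it gives $|\langle g,g\rangle|\leq 2^{m/4}\sqrt{{\rm Sc}\langle g,g\rangle}\,\sqrt{|\langle g,g\rangle|}$; cancelling $\sqrt{|\langle g,g\rangle|}$ (the case $\langle g,g\rangle=0$, equivalently $g=0$, being trivial) and squaring yields
\[
|\langle g,g\rangle|\leq 2^{m/2}\,{\rm Sc}\langle g,g\rangle,
\]
which is the ``in particular'' assertion. Feeding this back into the intermediate inequality turns $\sqrt{|\langle g,g\rangle|}$ into $2^{m/4}\sqrt{{\rm Sc}\langle g,g\rangle}$ and delivers exactly $|\langle f,g\rangle|\leq 2^{m/2}\sqrt{{\rm Sc}\langle f,f\rangle}\sqrt{{\rm Sc}\langle g,g\rangle}$. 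The single delicate point — and the reason the proposition is not immediate from axiom~(4) — is precisely the circularity here: the natural estimate for ${\rm Sc}\langle ga,ga\rangle$ unavoidably produces $|\langle g,g\rangle|$ rather than ${\rm Sc}\langle g,g\rangle$, and for a general self-conjugate Clifford number the norm is not controlled by the scalar part; the self-application above is what closes the loop. Everything else is bookkeeping with the identities from Section~2.
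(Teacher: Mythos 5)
Your proof is correct, and it takes a different route than the paper's. Both proofs exploit the same underlying device of feeding a right-multiplied copy of $g$ into axiom~(4), but with a different multiplier. The paper decomposes $\langle f,g\rangle=\sum_T\langle f,g\rangle_Te_T$ into its $2^m$ real components and applies axiom~(4) to each pair $(f,ge_T)$: since each $e_T$ is a unit Clifford number, the identity ${\rm Sc}\langle ge_T,ge_T\rangle={\rm Sc}(\overline{e_T}\langle g,g\rangle e_T)={\rm Sc}\langle g,g\rangle$ (cyclicity of ${\rm Sc}$ together with $e_T\overline{e_T}=1$) makes each component $\langle f,g\rangle_T$ bounded by $\sqrt{{\rm Sc}\langle f,f\rangle}\sqrt{{\rm Sc}\langle g,g\rangle}$ with no loss, and the constant $2^{m/2}$ enters only at the end upon summing the $2^m$ identical componentwise bounds. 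You instead multiply by the non-unit $a=\langle f,g\rangle$ in a single application of axiom~(4), which collapses the left-hand scalar part to $|a|^2$ in one stroke but --- precisely because $a$ is not a unit --- produces $|\langle g,g\rangle|$ rather than ${\rm Sc}\langle g,g\rangle$ on the right, forcing the bootstrap you carry out. Both routes deliver the same constant; the paper's is a bit more economical, with no self-application and with the ``in particular'' falling out by setting $f=g$ at the very end rather than serving as a linchpin in the middle, while yours avoids both the componentwise decomposition and the cyclicity of ${\rm Sc}$, replacing them with the single elementary estimate $|{\rm Sc}(xy)|\leq|x|\,|y|$.
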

\begin{proof}
Write $\langle f, g\rangle=\sum_{T\in\mathcal{P}N}\langle f, g\rangle_Te_T$, we get for every $T\in\mathcal{P}N$
\begin{align*}
\langle f, g\rangle_T^2&=({\rm Sc}(\overline{e_T}\langle f, g\rangle))^2
\\&=({\rm Sc}\langle f, ge_T\rangle)^2
\\&\leq({\rm Sc}\langle f, f\rangle)({\rm Sc}\langle ge_T, ge_T\rangle)
\\&=({\rm Sc}\langle f, f\rangle)({\rm Sc}(\overline{e_T}\langle g, g\rangle e_T))
\\&=({\rm Sc}\langle f, f\rangle)({\rm Sc}\langle g, g\rangle),
\end{align*}
so $|\langle f, g\rangle|=(\sum_{T\in\mathcal{P}N}\langle f, g\rangle_T^2)^{1/2}\leq 2^{\frac{m}{2}}\sqrt{{\rm Sc}\langle f, f\rangle}\sqrt{{\rm Sc}\langle g, g\rangle}$.
\end{proof}

\begin{pro}\label{IPS2}
Every right $\mathscr{A}_m$-module inner product space $\mathscr{H}$ is a right $\mathscr{A}_m$-module normed space with the induced norm $\|f\|:=\sqrt{{\rm Sc}\langle f, f\rangle}$ for $f\in\mathscr{H}$.
\end{pro}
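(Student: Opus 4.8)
The plan is to verify the three defining properties of a right $\mathscr{A}_m$-module normed space (Definition 3.2) for the candidate norm $\|f\|=\sqrt{{\rm Sc}\langle f,f\rangle}$, using the four axioms of the inner product together with Proposition \ref{IPS1}. Non-negativity and non-degeneracy (property (1)) are immediate: ${\rm Sc}\langle f,f\rangle\geq 0$ by axiom (3), so the square root is well-defined and real, and $\|f\|=0$ iff ${\rm Sc}\langle f,f\rangle=0$ iff $f=0$, again by axiom (3).

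For the homogeneity estimate (property (2)), I would compute ${\rm Sc}\langle f\lambda,f\lambda\rangle$. Using axiom (2) in the first slot and then axiom (1) to move $\lambda$ out of the second slot, one gets $\langle f\lambda,f\lambda\rangle=\overline{\lambda}\langle f,f\rangle\lambda$, hence $\|f\lambda\|^2={\rm Sc}(\overline{\lambda}\langle f,f\rangle\lambda)$. Now $|{\rm Sc}(\overline{\lambda}\langle f,f\rangle\lambda)|\leq|\overline{\lambda}\langle f,f\rangle\lambda|\leq 2^{m/2}|\overline{\lambda}|\,|\langle f,f\rangle\lambda|\leq 2^{m}|\lambda|^2|\langle f,f\rangle|$ by the submultiplicativity $|xy|\leq 2^{m/2}|x||y|$ from Section 2, and then $|\langle f,f\rangle|\leq 2^{m/2}{\rm Sc}\langle f,f\rangle$ by the second part of Proposition \ref{IPS1}; combining gives $\|f\lambda\|^2\leq 2^{3m/2}|\lambda|^2\|f\|^2$, so the constant $C=2^{3m/4}$ works. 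For real $\lambda$, $\overline{\lambda}=\lambda$ is central, so $\langle f\lambda,f\lambda\rangle=\lambda^2\langle f,f\rangle$ and $\|f\lambda\|=|\lambda|\|f\|$ exactly.

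For the triangle inequality (property (3)), I would expand $\|f+g\|^2={\rm Sc}\langle f+g,f+g\rangle={\rm Sc}\langle f,f\rangle+{\rm Sc}\langle g,g\rangle+{\rm Sc}\langle f,g\rangle+{\rm Sc}\langle g,f\rangle$, using bilinearity (axiom (2)) in both slots. Since $\langle g,f\rangle=\overline{\langle f,g\rangle}$ (axiom (1)) and conjugation fixes scalar parts, ${\rm Sc}\langle f,g\rangle+{\rm Sc}\langle g,f\rangle=2{\rm Sc}\langle f,g\rangle\leq 2\sqrt{{\rm Sc}\langle f,f\rangle}\sqrt{{\rm Sc}\langle g,g\rangle}$ by axiom (4). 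Therefore $\|f+g\|^2\leq\|f\|^2+2\|f\|\|g\|+\|g\|^2=(\|f\|+\|g\|)^2$, and taking square roots finishes the proof.

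The only mildly delicate point — not really an obstacle — is property (2): one must be careful to extract a legitimate constant $C$ depending only on $m$ (not on $f$ or $\lambda$), which is exactly what Proposition \ref{IPS1} and the norm inequality $|xy|\le 2^{m/2}|x||y|$ are there to supply, and to check that the real-scalar case collapses to an equality because real numbers are central and self-conjugate. Everything else is a routine expansion using the inner-product axioms, so the proof is short.
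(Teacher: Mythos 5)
Your proof is correct and takes essentially the same route as the paper's: verify the three norm axioms directly from the inner-product axioms together with Proposition \ref{IPS1}. The only difference is in property (2), where the paper uses the cyclic invariance ${\rm Sc}(\overline{\lambda}\langle f,f\rangle\lambda)={\rm Sc}(\lambda\overline{\lambda}\langle f,f\rangle)$ to obtain $C=2^{m/2}$, whereas you apply the submultiplicativity $|xy|\leq 2^{m/2}|x||y|$ twice and land on the larger but equally admissible $C=2^{3m/4}$.
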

\begin{proof}
For any $\lambda\in\mathscr{A}_m$ and $f, g\in\mathscr{H}$,
\begin{align*}
\|f\lambda\|&=\sqrt{{\rm Sc}\langle f\lambda, f\lambda\rangle}
\\&=\sqrt{{\rm Sc}(\overline{\lambda}\langle f, f\rangle\lambda)}
\\&=\sqrt{{\rm Sc}(\lambda\overline{\lambda}\langle f, f\rangle)}
\\&\leq\sqrt{|\lambda\overline{\lambda}||\langle f, f\rangle|}
\\&\leq\sqrt{2^{\frac{m}{2}}|\lambda|^2\cdot2^{\frac{m}{2}}\|f\|^2}
\\&=2^{\frac{m}{2}}|\lambda|\|f\|,
\end{align*}
and $\|f+g\|^2={\rm Sc}\langle f+g, f+g\rangle={\rm Sc}(\langle f, f\rangle+2\langle f, g\rangle+\langle g, g\rangle)\leq(\|f\|+\|g\|)^2$.
\end{proof}

A complete right $\mathscr{A}_m$-module normed space is called a right $\mathscr{A}_m$-module Banach space, and a complete right $\mathscr{A}_m$-module inner product space is called a right $\mathscr{A}_m$-module Hilbert space. The case for the left $\mathscr{A}_m$-module can be similarly formulated.

\begin{lem}\label{completeness}
If $\mathscr{H}$ is a right $\mathscr{A}_m$-module inner product space, then for any function $f\in\mathscr{H}$, $\{fc: c\in\mathscr{A}_m\}$ is a close subspace of $\mathscr{H}$.
\end{lem}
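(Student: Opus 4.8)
The plan is to prove the two features of $S:=\{fc:c\in\mathscr{A}_m\}$ separately: that it is a right $\mathscr{A}_m$-submodule, and that it is closed in $\mathscr{H}$, the latter coming for free from the fact that $S$ is a finite-dimensional real linear subspace.

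First I would dispose of the submodule part, which is a direct application of the module axioms: given $fc_1,fc_2\in S$ and $\lambda,\mu\in\mathscr{A}_m$, one has $(fc_1)\lambda+(fc_2)\mu=f(c_1\lambda)+f(c_2\mu)=f(c_1\lambda+c_2\mu)\in S$, and $0=f\cdot 0\in S$; hence $S$ is a right $\mathscr{A}_m$-submodule, in particular an abelian subgroup and (since $\mathbb{R}\subset\mathscr{A}_m$) a real linear subspace of $\mathscr{H}$.

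For closedness, the key observation is that $S$ is finite-dimensional over $\mathbb{R}$. Writing $c=\sum_{T\in\mathcal{P}N}c_Te_T$ with $c_T\in\mathbb{R}$, and using that real scalars commute with everything and act $\mathbb{R}$-linearly, we get $fc=\sum_{T\in\mathcal{P}N}(fe_T)c_T=\sum_{T\in\mathcal{P}N}c_T(fe_T)$, so $S={\rm span}_{\mathbb{R}}\{fe_T:T\in\mathcal{P}N\}$ is spanned by at most $2^m$ vectors; say $\dim_{\mathbb{R}}S=n$ with basis $g_1,\dots,g_n$. By Proposition \ref{IPS2}, $(\mathscr{H},\|\cdot\|)$ with $\|g\|=\sqrt{{\rm Sc}\langle g,g\rangle}$ is a real normed space (note $\|g\lambda\|=|\lambda|\|g\|$ for $\lambda\in\mathbb{R}$). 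Now I would invoke the classical fact that a finite-dimensional subspace of a normed space is complete, hence closed: on $\mathbb{R}^n$ the map $(a_1,\dots,a_n)\mapsto\|\sum_{j=1}^n a_jg_j\|$ is continuous and strictly positive on the Euclidean unit sphere, which is compact, so it is bounded below there by some $\delta>0$; this gives the two-sided estimate $\delta\,(\sum_j a_j^2)^{1/2}\leq\|\sum_j a_jg_j\|\leq(\sum_j\|g_j\|^2)^{1/2}(\sum_j a_j^2)^{1/2}$, so a Cauchy sequence in $S$ has Cauchy coordinate sequences in $\mathbb{R}$, which converge, and the corresponding combination of the $g_j$ is the limit and lies in $S$. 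Thus $S$ is complete, hence closed in $\mathscr{H}$.

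There is no substantial obstacle here; the only points needing care are verifying that, restricted to real scalars, $\mathscr{H}$ is a genuine real normed vector space (immediate from the axioms and Proposition \ref{IPS2}), and the routine equivalence-of-norms argument on the finite-dimensional space. As an alternative to extracting a basis, one may simply note that $c\mapsto fc$ is a bounded real-linear map from the $2^m$-dimensional space $\mathscr{A}_m$ onto $S$, and the image of a finite-dimensional space under a linear map is finite-dimensional, hence closed; I would present whichever version is shorter in the final write-up.
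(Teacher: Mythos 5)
Your proof is correct, and it takes a genuinely different (and more conceptual) route than the paper. The paper works directly with the positive-semidefinite quadratic form $c\mapsto\|fc\|^2={\rm Sc}(\overline{c}\langle f,f\rangle c)$ on $\mathbb{R}^{2^m}$, diagonalizes it with an orthogonal matrix, observes that a Cauchy sequence $\{fc_N\}$ forces convergence of exactly those coordinates of $d_N=\Gamma^{-1}c_N$ aligned with the positive eigenvalues, and then builds a limit vector $c=\Gamma d$ by filling the zero-eigenvalue slots with zeros. That hands-on construction is exactly how the paper copes with the non-injectivity of $c\mapsto fc$. You instead observe that $S$ is an $\mathbb{R}$-linear subspace of dimension at most $2^m$, then invoke (and, properly, re-prove via the compactness/equivalence-of-norms argument) the classical fact that a finite-dimensional subspace of a real normed space is complete and hence closed; extracting a basis of $S$ is your way of quotienting out the kernel of $c\mapsto fc$, which plays the same role as the paper's restriction to positive-eigenvalue coordinates. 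Structurally the two arguments are dual: the paper diagonalizes the Gram matrix, you pass to a basis of the image; both ultimately rest on finite-dimensionality of $\mathscr{A}_m$ over $\mathbb{R}$ and completeness of $\mathbb{R}$. What your approach buys is brevity, reuse of a standard theorem, and a cleaner separation of the ``submodule'' and ``closedness'' issues; what the paper's buys is self-containedness and an explicit description of the limit point in terms of the eigen-decomposition of $\langle f,f\rangle$. One small remark: your one-line alternative at the end (``image of a finite-dimensional space under a linear map is finite-dimensional, hence closed'') is the cleanest formulation and needs no boundedness of $c\mapsto fc$ at all — finite dimensionality of the image is purely algebraic, and closedness then follows from the same classical fact.
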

\begin{proof}
Our goal is to show that if $\|fc_N-fc_M\|\rightarrow 0$ ($N, M\rightarrow\infty$),
then there exists $c\in\mathscr{A}_m$ such that $\|fc_N-fc\|\rightarrow 0$ as $N\rightarrow\infty$.
Because $$\|fc_N-fc_M\|^2={\rm Sc}\langle f(c_N-c_M), f(c_N-c_M)\rangle={\rm Sc}(\overline{(c_N-c_M)}\langle f,f\rangle(c_N-c_M))\geq 0,$$
$\|fc_N-fc_M\|^2$ can be seen as a positive semidefinite quadratic form of $c_N-c_M$.
Now we treat $c_N-c_M$ as a column vector whose $i$-th component coincides with the $i$-th component of its algebraic form,
and denote by $A$ the real symmetric matrix associated to the quadratic form $\|fc_N-fc_M\|^2$, which is determined by $\langle f,f\rangle$.
Then we have $$\|fc_N-fc_M\|^2=(c_N-c_M)^\top A(c_N-c_M).$$
Let $\Gamma$ be the orthogonal matrix such that $\Gamma^\top A\Gamma$ is a diagonal matrix with the positive diagonal entries being $\lambda_{i_1},\ldots,\lambda_{i_k}$, and write
$c_N-c_M=\Gamma(d_N-d_M)$, then
\begin{align*}
\|fc_N-fc_M\|^2&=(d_N-d_M)^\top\Gamma^\top A\Gamma(d_N-d_M)
\\&=\lambda_{i_1}(d_{N_{i_1}}-d_{M_{i_1}})^2+\ldots+\lambda_{i_k}(d_{N_{i_k}}-d_{M_{i_k}})^2\rightarrow 0,
\end{align*}
which means $d_{N_{i_1}}-d_{M_{i_1}}\rightarrow 0,\ldots,d_{N_{i_k}}-d_{M_{i_k}}\rightarrow 0$ as $N, M\rightarrow\infty$.
By the completeness of the real numbers, there exists $d_{i_1},\ldots,d_{i_k}$ such that
$d_{N_{i_1}}-d_{i_1}\rightarrow 0,\ldots,d_{N_{i_k}}-d_{i_k}\rightarrow 0$ as $N\rightarrow\infty$. Now let
$$d=(0,\ldots,0,d_{i_1},0,\ldots,0,d_{i_2},0,\ldots,0,d_{i_k},0,\ldots,0)^\top,$$
$c=\Gamma d$, then
\begin{align*}
\|fc_N-fc\|^2&=(d_N-d)^\top\Gamma^\top A\Gamma(d_N-d)
\\&=\lambda_{i_1}(d_{N_{i_1}}-d_{i_1})^2+\ldots+\lambda_{i_k}(d_{N_{i_k}}-d_{i_k})^2\rightarrow 0
\end{align*}
as $N\rightarrow\infty$.
\end{proof}

\begin{lem}\label{OP}
If $\mathscr{H}$ is a right $\mathscr{A}_m$-module inner product space,
then for any functions $\alpha, \beta\in\mathscr{H}$, the orthogonal projection
of $\alpha$ onto the subspace spanned by $\beta$ uniquely exists,
denoted by $\mathscr{P}_{\overline{\mbox{\scriptsize span}}\{\beta\}}\alpha$.
\end{lem}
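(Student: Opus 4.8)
The plan is to reduce the projection problem in $\mathscr{H}$ to a minimization problem over the parameter algebra $\mathscr{A}_m$, where it becomes a finite-dimensional quadratic optimization that the real numbers can handle. Concretely, given $\alpha,\beta\in\mathscr{H}$, I want to find $c\in\mathscr{A}_m$ such that $\beta c$ is the element of $\overline{\mbox{span}}\{\beta\}=\{\beta c:c\in\mathscr{A}_m\}$ closest to $\alpha$ in the induced norm $\|\cdot\|=\sqrt{{\rm Sc}\langle\cdot,\cdot\rangle}$; by Lemma~\ref{completeness} this subspace is already closed, so existence of a closest point is not automatic from abstract Hilbert-space theory (the module is not a genuine Hilbert space over a field) but must be obtained by hand. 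First I would expand $\|\alpha-\beta c\|^2={\rm Sc}\langle\alpha-\beta c,\alpha-\beta c\rangle={\rm Sc}\langle\alpha,\alpha\rangle-2{\rm Sc}(\overline{c}\langle\beta,\alpha\rangle)+{\rm Sc}(\overline{c}\langle\beta,\beta\rangle c)$, using conditions (1) and (2) of the inner product together with ${\rm Sc}(xy)={\rm Sc}(yx)$. Writing $c$ as a real column vector of its $2^m$ coordinates, this is $\|\alpha\|^2 - 2b^\top c + c^\top A c$ for a suitable real vector $b$ (from $\langle\beta,\alpha\rangle$) and the positive semidefinite real symmetric matrix $A$ associated to the quadratic form $c\mapsto{\rm Sc}(\overline{c}\langle\beta,\beta\rangle c)$ — exactly the matrix used in the proof of Lemma~\ref{completeness}.

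Next I would solve this real quadratic minimization. Diagonalize $A=\Gamma\Lambda\Gamma^\top$ with $\Gamma$ orthogonal and $\Lambda$ diagonal with nonnegative entries; in the rotated coordinates $d=\Gamma^\top c$, $\tilde b=\Gamma^\top b$, the objective splits as $\|\alpha\|^2+\sum_i(\lambda_i d_i^2-2\tilde b_i d_i)$. For each index with $\lambda_i>0$ the minimizing coordinate is $d_i=\tilde b_i/\lambda_i$. The key point I must verify is that whenever $\lambda_i=0$ one also has $\tilde b_i=0$, so that those coordinates contribute nothing and can be set arbitrarily (say zero) without obstructing the minimum; this is where the Cauchy–Schwarz-type inequality (condition (4), amplified by Proposition~\ref{IPS1}) enters — if $\lambda_i=0$ then the corresponding rotated "direction" $\beta\gamma$ (where $\gamma$ is the Clifford number with coordinate vector $\Gamma e_i$) satisfies $\|\beta\gamma\|=0$, hence $\beta\gamma=0$ by condition (3), and then ${\rm Sc}\langle\beta\gamma,\alpha\rangle=0$, which says precisely $\tilde b_i=0$. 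This yields a concrete minimizer $c=\Gamma d$, and the corresponding $\beta c$ is the desired projection $\mathscr{P}_{\overline{\mbox{\scriptsize span}}\{\beta\}}\alpha$.

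For uniqueness, I would argue that although the minimizing $c$ need not be unique (the null directions of $A$ are free, precisely the $\gamma$ with $\beta\gamma=0$), the element $\beta c\in\mathscr{H}$ is unique: if $\beta c$ and $\beta c'$ both attain the minimum, then $\beta(c-c')$ lies in the kernel of the quadratic form, so $\|\beta(c-c')\|=0$ and hence $\beta c=\beta c'$. I would also record the characterization that $p=\beta c$ is the projection if and only if $\alpha-p\perp\beta\lambda$ for all $\lambda\in\mathscr{A}_m$ in the sense ${\rm Sc}\langle\alpha-p,\beta\lambda\rangle=0$, equivalently $\langle\alpha-p,\beta\rangle=0$ (taking $\lambda=e_T$ for each $T$ recovers all components), which is the orthogonality statement one needs for the later Gram–Schmidt induction; this follows by differentiating the quadratic form, or directly from the stationarity conditions $\lambda_i d_i=\tilde b_i$ obtained above.

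The main obstacle is the step ensuring $\tilde b_i=0$ on the null space of $A$ — i.e., showing the linear term of the quadratic form never points "outside" the range of the quadratic part. Everything else is linear algebra over $\mathbb{R}$ plus bookkeeping with the axioms; but that compatibility between $\langle\beta,\alpha\rangle$ and $\langle\beta,\beta\rangle$ is exactly what could fail in a pathological bilinear setting, and here it is rescued by the positive-semidefiniteness axiom (3) combined with the scalar Cauchy–Schwarz axiom (4). I would make sure to phrase it via the implication $\beta\gamma=0\Rightarrow\langle\alpha,\beta\gamma\rangle$ has zero scalar part, which is immediate once $\beta\gamma$ is genuinely the zero element of $\mathscr{H}$.
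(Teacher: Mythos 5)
Your proof is correct, but it takes a genuinely different route from the paper's. The paper follows the classical Hilbert-space template: take a minimizing sequence $\{\beta c_N\}$ for $\inf_{c'}\|\alpha-\beta c'\|$, show it is Cauchy via the parallelogram identity, invoke Lemma~\ref{completeness} to extract a limit $\beta c$ in the closed subspace $\{\beta c : c\in\mathscr{A}_m\}$, get uniqueness from the parallelogram identity again, and then deduce $\langle\alpha-\beta c,\beta\rangle=0$ by the first-order variational argument, first perturbing by $\beta x$ ($x\in\mathbb{R}$) and then by $\beta e_T x$ to kill every component. You instead bypass the minimizing sequence entirely: you expand $\|\alpha-\beta c\|^2$ as a real quadratic function $\|\alpha\|^2-2b^\top c+c^\top Ac$ of the $2^m$ real coordinates of $c$, diagonalize the positive-semidefinite matrix $A$, and solve coordinatewise, with the compatibility check $\lambda_i=0\Rightarrow\tilde b_i=0$ (equivalently, $b\in\mathrm{ran}\,A$) supplied by axiom (3), since $\lambda_i=\|\beta\gamma\|^2=0$ forces $\beta\gamma=0$ and hence $\tilde b_i={\rm Sc}\langle\beta\gamma,\alpha\rangle=0$; note that axiom (4) alone would also do ($|{\rm Sc}\langle\alpha,\beta\gamma\rangle|\le\|\alpha\|\,\|\beta\gamma\|=0$), so your brief hesitation between the two axioms is immaterial. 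Both proofs ultimately rest on the finite-dimensionality of $\mathscr{A}_m$ — the paper through Lemma~\ref{completeness}, you through the diagonalization — but yours is more constructive: it produces $c$ by solving $Ac=b$, which is precisely what later specializes to the explicit formula $\beta\langle\beta,\beta\rangle^{-1}\langle\alpha,\beta\rangle$ whenever $\langle\beta,\beta\rangle$ is invertible, and it makes the failure of uniqueness of $c$ (as opposed to $\beta c$) transparent as exactly the null space of $A$. One bookkeeping slip worth fixing: the cross term in your expansion should be $-2\,{\rm Sc}(\overline{c}\langle\alpha,\beta\rangle)$, i.e., $-2\,{\rm Sc}(\langle\beta,\alpha\rangle c)$, rather than $-2\,{\rm Sc}(\overline{c}\langle\beta,\alpha\rangle)$; the subsequent argument is unaffected since either way one has a real linear form in the coordinates of $c$.
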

\begin{proof}
The purpose is to show that there exists a unique $\beta c$ such that
$$\|\alpha-\beta c\|=\inf_{c'\in\mathscr{A}_m}\|\alpha-\beta c'\|,$$
and such $\beta c$ satisfies
$$\langle \alpha-\beta c, \beta\rangle=0.$$
Let $d=\inf_{c'\in\mathscr{A}_m}\|\alpha-\beta c'\|$, then for any $N\in\mathbb{N}_+$ there exists $c_N\in\mathscr{A}_m$ such that
$d\leq\|\alpha-\beta c_N\|\leq d+\frac{1}{N}$. By the parallelogram identity,
\begin{align*}
\|\beta c_N-\beta c_M\|^2&=\|(\alpha-\beta c_N)-(\alpha-\beta c_M)\|^2
\\&=2(\|\alpha-\beta c_N\|^2+\|\alpha-\beta c_M\|^2)-4\|\alpha-\beta\frac{c_N+c_M}{2}\|^2
\\&\leq 2((d+\frac{1}{N})^2+(d+\frac{1}{M})^2)-4d^2\rightarrow 0
\end{align*}
as $N, M\rightarrow\infty$. By Lemma \ref{completeness}, $\{\beta c_N\}_{N=1}^\infty$ has a limit $\beta c$,
so by the continuity of the norm we get $\|\alpha-\beta c\|=d$. To prove the uniqueness,
suppose there is another $\beta\widetilde{c}$ satisfying $\|\alpha-\beta\widetilde{c}\|=d$, then
\begin{align*}
\|\beta c-\beta\widetilde{c}\|^2&=2(\|\alpha-\beta c\|^2+\|\alpha-\beta\widetilde{c}\|^2)-4\|\alpha-\beta\frac{c+\widetilde{c}}{2}\|^2
\\&\leq 4d^2-4d^2=0,
\end{align*}
which implies $\beta c=\beta\widetilde{c}$. Finally, we turn to show that $\langle \alpha-\beta c, \beta\rangle=0$.
For any $x\in\mathbb{R}$, we have
\begin{align*}
d^2&\leq\|\alpha-\beta c-\beta x\|^2
\\&={\rm Sc}\langle\alpha-\beta c-\beta x, \alpha-\beta c-\beta x\rangle
\\&=\|\alpha-\beta c\|^2-2x{\rm Sc}\langle \alpha-\beta c, \beta\rangle+x^2\|\beta\|^2
\\&=d^2-2x{\rm Sc}\langle \alpha-\beta c, \beta\rangle+x^2\|\beta\|^2.
\end{align*}
So $$-2x{\rm Sc}\langle \alpha-\beta c, \beta\rangle+x^2\|\beta\|^2\geq 0$$
for all $x\in\mathbb{R}$, which implies
$${\rm Sc}\langle \alpha-\beta c, \beta\rangle=0.$$
For each $T\in\mathcal{P}N$, replace $\beta x$ by $\beta e_T x$ and repeat the above discussions
we see that every component of $\langle \alpha-\beta c, \beta\rangle$ equals $0$. Hence $\langle \alpha-\beta c, \beta\rangle=0$.
\end{proof}

\begin{rem}\normalfont
In the above proof the orthogonal projection $\beta c$ is unique, but $c\in\mathscr{A}_m$ may not be unique,
that is different from the case of complex inner product space.
\end{rem}

As a consequence of Lemma \ref{OP} we have
\begin{theo}\label{main}
Let $\{\alpha_n\}_{n=1}^\infty$ be a sequence of functions in a right $\mathscr{A}_m$-module inner product space $\mathscr{H}$. Set
\begin{align*}
\beta_1&=\alpha_1,\\
\beta_2&=\alpha_2-\mathscr{P}_{\overline{\mbox{\scriptsize span}}\{\beta_1\}}\alpha_2,\\
&\,\,\,\vdots\\
\beta_n&=\alpha_n-\sum_{i=1}^{n-1}\mathscr{P}_{\overline{\mbox{\scriptsize span}}\{\beta_i\}}\alpha_n,\\
&\,\,\,\vdots
\end{align*}
then $\{\beta_n\}_{n=1}^\infty$ is an orthogonal system of functions in $\mathscr{H}$.
\end{theo}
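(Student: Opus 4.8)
The plan is to prove by induction on $n$ the statement that $\{\beta_1,\ldots,\beta_n\}$ is an orthogonal system, i.e.\ $\langle\beta_i,\beta_j\rangle=0$ whenever $1\le i<j\le n$; since $\langle\beta_j,\beta_i\rangle=\overline{\langle\beta_i,\beta_j\rangle}$ by conjugate symmetry (property (1) of the inner product), this also gives vanishing for $i>j$. The base case $n=1$ is vacuous, and the case $n=2$ is precisely the orthogonality clause of Lemma \ref{OP} applied with $\alpha=\alpha_2$ and $\beta=\beta_1$. So assume $\{\beta_1,\ldots,\beta_{n-1}\}$ is already known to be orthogonal; it remains to show $\langle\beta_n,\beta_j\rangle=0$ for every $j\le n-1$.

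For the inductive step I would first record the content of Lemma \ref{OP} in algebraic form. By Lemma \ref{completeness}, the closed span of $\beta_i$ equals $\{\beta_i c:c\in\mathscr{A}_m\}$, so for each $i\le n-1$ we may write $\mathscr{P}_{\overline{\mbox{\scriptsize span}}\{\beta_i\}}\alpha_n=\beta_i c_{i,n}$ for some $c_{i,n}\in\mathscr{A}_m$ (not unique, as the Remark warns, but the projected element is well defined; if $\beta_i=0$ the term simply drops out). The orthogonality assertion of Lemma \ref{OP} then says $\langle\alpha_n-\beta_i c_{i,n},\beta_i\rangle=0$, and expanding by the right-linearity of the inner product in its first argument (property (2)) this becomes the identity
\[
\langle\alpha_n,\beta_i\rangle=\langle\beta_i,\beta_i\rangle\,c_{i,n},\qquad i=1,\ldots,n-1.
\]

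Now fix $j\le n-1$ and compute, using the definition $\beta_n=\alpha_n-\sum_{i=1}^{n-1}\beta_i c_{i,n}$ together with property (2) once more,
\[
\langle\beta_n,\beta_j\rangle=\langle\alpha_n,\beta_j\rangle-\sum_{i=1}^{n-1}\langle\beta_i,\beta_j\rangle\,c_{i,n}.
\]
By the inductive hypothesis every summand with $i\ne j$ vanishes, so the sum collapses to $\langle\beta_j,\beta_j\rangle\,c_{j,n}$, which by the displayed identity above equals $\langle\alpha_n,\beta_j\rangle$. Hence $\langle\beta_n,\beta_j\rangle=\langle\alpha_n,\beta_j\rangle-\langle\alpha_n,\beta_j\rangle=0$, which closes the induction and proves the theorem.

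I do not expect a genuine obstacle here: once Lemma \ref{OP} is in hand the argument is pure bookkeeping. The only points that require a little care are (i) phrasing everything in terms of the well-defined projected element $\beta_i c_{i,n}$ and the representative-independent identity $\langle\alpha_n,\beta_i\rangle=\langle\beta_i,\beta_i\rangle c_{i,n}$, so that the non-uniqueness of $c_{i,n}$ is harmless, and (ii) keeping the one-sidedness of the module structure straight, since linearity lives in the first slot and conjugate-linearity in the second. No appeal to completeness of $\mathscr{H}$ is needed beyond what is already built into Lemma \ref{OP}.
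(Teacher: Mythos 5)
Your proof is correct and is exactly the inductive bookkeeping that the paper leaves implicit: the paper states Theorem~\ref{main} with no written proof, merely as ``a consequence of Lemma~\ref{OP},'' so your argument supplies the details the authors had in mind. The one computation worth highlighting is the reason the usual Gram--Schmidt calculation survives non-commutativity and non-invertibility: you correctly use only right-linearity in the first slot, i.e.\ $\langle\beta_i c_{i,n},\beta_j\rangle=\langle\beta_i,\beta_j\rangle c_{i,n}$, and the identity $\langle\alpha_n,\beta_i\rangle=\langle\beta_i,\beta_i\rangle c_{i,n}$ coming from Lemma~\ref{OP}, so nothing ever needs to be divided by $\langle\beta_i,\beta_i\rangle$ and the non-uniqueness of $c_{i,n}$ is genuinely harmless. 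No gap.
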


As an application, we now consider the inner spherical monogenics of order $k$ ($k\in\mathbb{N}$) in Clifford analysis
(play an analogous role as the powers of the complex variable $z$), denoted by
$$\mathcal{M}_k=\{V_{l_1,\ldots,l_k}: (l_1,\ldots,l_k)\in\{1,\ldots,m\}^k\},$$
where by definition $V_0(x)=e_0$,
$$V_{l_1,\ldots,l_k}=\frac{1}{k!}\sum_{\pi(l_1,\ldots,l_k)}z_{l_1}\ldots z_{l_k},$$
in which the sum runs over all distinguishable permutations of $l_1,\ldots,l_k$,
and the hyper-complex variables
$$z_l=x_le_0-x_0e_l,\quad l=1,\ldots,m.$$
For $f, g\in\bigcup_{k\in\mathbb{N}}\mathcal{M}_k$, the inner product is defined by
$$\langle f, g\rangle:=\frac{1}{\omega_m}\int_{S^m}\overline{g}fdS,$$
with the induced norm
$$\|f\|:=({\rm Sc}\langle f, f\rangle)^{1/2}=\left(\frac{1}{\omega_m}\int_{S^m}|f|^2dS\right)^{1/2},$$
where $S^m$ is the unit sphere in $\mathbb{R}^{m+1}$ centered at the origin, $dS$ is the surface area element on $S^m$.

Inner spherical monogenics of different orders are mutually orthogonal, but for a fixed order $k$, there are $\binom{m+k-1}{k}$ elements in
$\mathcal{M}_k$ being not necessarily mutually orthogonal. So it is natural to ask for the construction of the orthonormal basis of $\mathcal{M}_k$.
The existence of the orthonormal basis of $\mathcal{M}_k$ was proved in \cite{BDS} by induction, but with no concrete expressions. By Theorem \ref{main} we can now immediately give the explicit orthogonal formulas. More precisely, we have
\begin{theo}\label{orthbasis}
Rearrange the elements in $\mathcal{M}_k$ by writing $\mathcal{M}_k=\{V_1, V_2, \ldots, V_n\}$, where $n=\binom{m+k-1}{k}$,
then $\langle V_1, V_1\rangle$ is invertible. Let $U_1=V_1$, then
$$\mathscr{P}_{\overline{\mbox{\scriptsize span}}\{U_1\}}V_2=U_1\langle U_1,
U_1\rangle^{-1}\langle V_2, U_1\rangle.$$
Let
$$U_2=V_2-\mathscr{P}_{\overline{\mbox{\scriptsize span}}\{U_1\}}V_2,$$
then $\langle U_2, U_2\rangle$ is invertible and $\langle U_2, U_1\rangle=0$. In general, let
$$U_j=V_j-\sum_{i=1}^{j-1}\mathscr{P}_{\overline{\mbox{\scriptsize span}}\{U_i\}}U_j
=V_j-\sum_{i=1}^{j-1}U_i\langle U_i, U_i\rangle^{-1}\langle V_j,
U_i\rangle,\,\mbox{for } j\leq n,$$ then $\langle U_j, U_j\rangle$ is invertible for each $j\leq n$, and $\langle U_j, U_l\rangle=0$ for $j\neq l$.
So $\{U_1,\ldots,U_n\}$ consists an orthogonal basis of $\mathcal{M}_k$.
\end{theo}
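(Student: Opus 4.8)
The plan is an induction on $j$ that at each stage establishes four facts simultaneously: (i) for every $i<j$ the orthogonal projection of $V_j$ onto $\overline{\mbox{\scriptsize span}}\{U_i\}$ equals $U_i\langle U_i,U_i\rangle^{-1}\langle V_j,U_i\rangle$, so that the displayed recursion is well posed and is exactly the Gram--Schmidt recursion of Theorem~\ref{main} run on $\alpha_n:=V_n$; (ii) $\langle U_j,U_l\rangle=0$ for all $l<j$; (iii) $U_j=V_j+\sum_{i<j}V_id_i$ for some $d_i\in\mathscr{A}_m$, i.e. the change of basis from $\{V_i\}$ to $\{U_i\}$ is upper unitriangular over $\mathscr{A}_m$; and (iv) $\langle U_j,U_j\rangle$ is invertible. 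Once the induction is finished, (ii) yields mutual orthogonality, (iii) shows $\{U_1,\dots,U_n\}$ and $\{V_1,\dots,V_n\}$ generate the same right $\mathscr{A}_m$-module $\mathcal{M}_k$, and (ii) with (iv) shows the $U_j$ form a module basis of it, since $\sum_jU_jc_j=0$ forces $\langle U_l,U_l\rangle c_l=0$ and hence $c_l=0$ for every $l$.

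\noindent\emph{Routine steps.} Assume (i)--(iv) hold below level $j$. For (i): the vector $w:=U_i\langle U_i,U_i\rangle^{-1}\langle V_j,U_i\rangle$ lies in $\overline{\mbox{\scriptsize span}}\{U_i\}=\{U_ic:c\in\mathscr{A}_m\}$ (a closed subspace, by Lemma~\ref{completeness}), and a one-line computation with inner-product axioms (1)--(2) gives $\langle V_j-w,U_i\rangle=0$. Moreover any two elements of $\overline{\mbox{\scriptsize span}}\{U_i\}$ orthogonal to $U_i$ coincide, since their difference is some $U_ic$ with $\langle U_i,U_i\rangle c=0$, whence $\|U_ic\|^2={\rm Sc}(\overline{c}\langle U_i,U_i\rangle c)=0$; combined with the existence assertion of Lemma~\ref{OP} this identifies $w=\mathscr{P}_{\overline{\mbox{\scriptsize span}}\{U_i\}}V_j$. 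For (ii): expand $U_j=V_j-\sum_{i<j}U_i\langle U_i,U_i\rangle^{-1}\langle V_j,U_i\rangle$ and pair with $U_l$, $l<j$; by the inductive orthogonality only the term $i=l$ survives, giving $\langle U_j,U_l\rangle=\langle V_j,U_l\rangle-\langle U_l,U_l\rangle\langle U_l,U_l\rangle^{-1}\langle V_j,U_l\rangle=0$. For (iii): by the inductive (iii) each $U_i$ with $i<j$ is a right $\mathscr{A}_m$-combination of $V_1,\dots,V_i$ only, so the subtracted sum in $U_j$ contains no $V_j$ and the $V_j$-coefficient of $U_j$ is $1$.

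\noindent\emph{The main obstacle: invertibility of $\langle U_j,U_j\rangle$.} I would first reduce this to a non-zero-divisor property of $U_j$ as a function. For $x\in\mathscr{A}_m$, axioms (1)--(2) give $\langle U_jx,U_jx\rangle=\overline{x}\,\langle U_j,U_j\rangle\,x$ and $\langle U_j,U_j\rangle x=\langle U_jx,U_j\rangle$; hence $\langle U_j,U_j\rangle x=0$ forces $\|U_jx\|^2={\rm Sc}\langle U_jx,U_jx\rangle=0$, i.e. $U_jx\equiv0$ on $S^m$, and conversely $U_jx\equiv0$ forces $\langle U_j,U_j\rangle x=0$. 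By Proposition~\ref{criterion} it therefore suffices to prove that $U_jx\equiv0$ on $S^m$ implies $x=0$. Since $U_j$ is a homogeneous monogenic polynomial of degree $k$, $U_jx\equiv0$ propagates from $S^m$ to all of $\mathbb{R}^{m+1}$, in particular to the hyperplane $\{x_0=0\}$; on it $z_l=x_le_0$, so $V_{l_1,\dots,l_k}|_{x_0=0}$ is a positive scalar multiple of the monomial $\underline{x}^{\alpha}$, where $\underline{x}=(x_1,\dots,x_m)$ and $\alpha_i=\#\{r:l_r=i\}$, and distinct elements of $\mathcal{M}_k$ restrict to distinct monomials. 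By (iii), $U_j|_{x_0=0}$ is a positive multiple of the monomial attached to $V_j$ plus an $\mathscr{A}_m$-combination of the (distinct) monomials attached to $V_1,\dots,V_{j-1}$; reading off the coefficient of the monomial of $V_j$ in $(U_j|_{x_0=0})x\equiv0$ and using linear independence of monomials over $\mathscr{A}_m$ forces $x=0$. The base case $j=1$ is this same argument with $U_1=V_1$, which in particular proves $\langle V_1,V_1\rangle$ invertible for an arbitrary rearrangement of $\mathcal{M}_k$; alternatively, for the rearrangement with $V_1=z_1^k$ one notes that $z_1$ is a paravector, so $\overline{z_1^k}\,z_1^k=(\overline{z_1}z_1)^k=(x_0^2+x_1^2)^k$ and $\langle V_1,V_1\rangle=\frac{1}{\omega_m}\int_{S^m}(x_0^2+x_1^2)^k\,dS>0$. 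I expect this reduction---recognising that invertibility of $\langle U_j,U_j\rangle$ is equivalent to $U_j$ not being a right zero divisor as a function (for $m\geq3$ some nonzero elements of $\mathcal{M}_k$ do have non-invertible self-inner-product, so the unitriangular structure (iii) is essential), and that the Cauchy--Kovalevskaya restriction to $\{x_0=0\}$ turns the former into linear independence of monomials---to be the only non-routine step; the remainder is bookkeeping along Theorem~\ref{main}.
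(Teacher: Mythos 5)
Your argument is correct and is, at its core, the same as the paper's: assume $\langle U_j,U_j\rangle c=0$, deduce $\overline{c}\langle U_j,U_j\rangle c=\|U_jc\|^2=0$ so $U_jc\equiv0$, deduce $c=0$ from right $\mathscr{A}_m$-linear independence of $V_1,\dots,V_j$ (using the unitriangular structure to see the coefficient of $V_j$ is exactly $c$), and conclude via Proposition~\ref{criterion}. The only difference is how the linear-independence step is certified: the paper compresses it into the single phrase ``by the uniqueness of the Taylor series,'' whereas you make it concrete by restricting to the hyperplane $\{x_0=0\}$, where each $V_{l_1,\dots,l_k}$ becomes a distinct positive scalar multiple of a monomial $\underline{x}^\alpha$, so that reading off the coefficient of the monomial attached to $V_j$ gives $c=0$. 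This is the same fact, proved more explicitly (and is essentially the Cauchy--Kovalevskaya restriction underlying Taylor-coefficient uniqueness for monogenic polynomials), and your items (i)--(iii), together with the concluding remark that $\sum_j U_j c_j=0$ forces each $c_l=0$, are precisely the bookkeeping the paper delegates to ``the orthogonality can be directly verified.''
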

\begin{proof}
Consider the equation $\langle U_j, U_j\rangle c=0$ in $c$, then $\overline{c}\langle U_j, U_j\rangle c=\langle U_jc, U_jc\rangle=0$, so $\|U_jc\|^2={\rm Sc}\langle U_jc, U_jc\rangle=0$, which gives $U_jc=0$. Note that $U_jc$ is a linear combination of $V_1,\ldots,V_j$ with the coefficient of $V_j$ being $c$, by the uniqueness of the Taylor series we get $c=0$. By Proposition \ref{criterion} we conclude that $\langle U_j, U_j\rangle$ is invertible. The orthogonality $\langle U_i, U_j\rangle=0$ for $i\neq j$ can be directly verified.
\end{proof}

\section{Takenaka--Malmquist systems in higher dimensions}
Denote by $B^{m+1}$ the unit ball in $\mathbb{R}^{m+1}$ centered at the origin, $B^{m+1}=\{x\in \mathbb{R}^{m+1}: |x|<1\}$, $S^m=\partial B^{m+1}$. The monogenic Hardy space $\mathcal{H}^2(B^{m+1})$ consists of all left monogenic functions $f$ on $B^{m+1}$ that satisfy
$$\|f\|:=\sup_{0<r<1}\left(\frac{1}{\omega_m}
\int_{\eta\in S^m}|f(r\eta)|^2dS\right)^{1/2}<\infty.$$
For $f,g\in\mathcal{H}^2(B^{m+1})$, their Clifford number-valued inner product is defined by
$$\langle f, g\rangle:=\frac{1}{\omega_m}\int_{\eta\in S^m}\overline{g(\eta)}f(\eta)dS,$$
where $f(\eta)$ and $g(\eta)$ ($\eta\in S^m$) are respectively the non-tangential boundary limit of $f$ and $g$.
We have
$$\|f\|=({\rm Sc}\langle f, f\rangle)^{1/2}=\left(\frac{1}{\omega_m}\int_{\eta\in S^m}|f(\eta)|^2dS\right)^{1/2}.$$
$\mathcal{H}^2(B^{m+1})$ is a right $\mathscr{A}_m$-module Hilbert space.

Let $a\in B^{m+1}$,
$$S_a(x)=\frac{\overline{1-\overline{a}x}}
{|1-\overline{a}x|^{m+1}},\quad x\in B^{m+1}$$
be the Szeg\"{o} kernel for $B^{m+1}$. For any multi-index $k=(k_0,k_1,\ldots,k_m)\in\mathbb{N}^{m+1}$
and any $f\in\mathcal{H}^2(B^{m+1})$, by Cauchy's integral formula we have
\begin{equation}\label{Cauchy}
\langle f,\partial^k_aS_a\rangle=(\partial^k_xf)(a),
\end{equation}
where $\partial^k_xf=\frac{\partial^{|k|}f}{\partial x_0^{k_0}\partial
x_1^{k_1}\cdots\partial x_m^{k_m}}$, $|k|=\sum_{i=0}^mk_i$.

Let $\{a_n\}_{n=1}^{\infty}$ be a sequence of Clifford numbers taking values in $B^{m+1}$.
If $a_n$ ($n\in\mathbb{N}_+$) are distinct from each other, then we have
\begin{theo}
The GS orthogonalization process
\begin{equation*}
\left\{\begin{aligned}
&T_{a_1}:=S_{a_1},\\
&T_{a_1,\ldots,a_n}:=S_{a_n}-\sum_{i=1}^{n-1}T_{a_1,\ldots,a_i}
\langle T_{a_1,\ldots,a_i}, T_{a_1,\ldots,a_i}\rangle^{-1}\langle S_{a_n}, T_{a_1,\ldots,a_i}\rangle,\,n\geq 2
\end{aligned}\right.
\end{equation*}
is realizable.
\end{theo}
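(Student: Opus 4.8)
The plan is to reduce the claim to the invertibility, at every stage, of the Clifford-valued self inner products $\langle T_{a_1,\ldots,a_i},T_{a_1,\ldots,a_i}\rangle$. Indeed, by Lemma~\ref{OP} and Theorem~\ref{main} (applied with $\alpha_n=S_{a_n}$) the abstract Gram--Schmidt procedure built from orthogonal projections already produces an orthogonal system; the only thing left to justify in the displayed recursion is that each projection $\mathscr{P}_{\overline{\mbox{\scriptsize span}}\{T_{a_1,\ldots,a_i}\}}S_{a_n}$ really has the closed form $T_{a_1,\ldots,a_i}\langle T_{a_1,\ldots,a_i},T_{a_1,\ldots,a_i}\rangle^{-1}\langle S_{a_n},T_{a_1,\ldots,a_i}\rangle$, i.e.\ that $\langle T_{a_1,\ldots,a_i},T_{a_1,\ldots,a_i}\rangle$ is invertible in $\mathscr{A}_m$. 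Mirroring the proof of Theorem~\ref{orthbasis}, by Proposition~\ref{criterion} it is enough to show that $\langle T_{a_1,\ldots,a_i},T_{a_1,\ldots,a_i}\rangle c=0$ forces $c=0$; and since $\langle T_{a_1,\ldots,a_i}c,T_{a_1,\ldots,a_i}c\rangle=\overline{c}\,\langle T_{a_1,\ldots,a_i},T_{a_1,\ldots,a_i}\rangle c=0$, the positivity property of the inner product gives $T_{a_1,\ldots,a_i}c=0$ in $\mathcal{H}^2(B^{m+1})$. So everything comes down to proving the implication $T_{a_1,\ldots,a_i}c=0\Rightarrow c=0$.

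Here I would use the triangular structure of the recursion: an immediate induction shows that each $T_{a_1,\ldots,a_i}$ equals $S_{a_i}+\sum_{j<i}S_{a_j}\gamma_{i,j}$ with $\gamma_{i,j}\in\mathscr{A}_m$, that is, a right $\mathscr{A}_m$-linear combination of $S_{a_1},\ldots,S_{a_i}$ in which the coefficient of $S_{a_i}$ is $1$. Hence $T_{a_1,\ldots,a_i}c=S_{a_i}c+\sum_{j<i}S_{a_j}(\gamma_{i,j}c)=0$, and if the Szeg\"o kernels $S_{a_1},\ldots,S_{a_i}$ are right $\mathscr{A}_m$-linearly independent (a vanishing right-linear combination has all coefficients zero), then reading off the coefficient of $S_{a_i}$ yields $c=0$. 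Thus it remains to prove that distinct points $a_1,\ldots,a_i\in B^{m+1}$ produce right-linearly independent Szeg\"o kernels.

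For this I would invoke the reproducing identity \eqref{Cauchy}. Pairing the relation $\sum_{j}S_{a_j}c_j=0$ on the right with an arbitrary $f\in\mathcal{H}^2(B^{m+1})$ and using $\langle S_a,f\rangle=\overline{\langle f,S_a\rangle}=\overline{f(a)}$ converts it into $\sum_{j=1}^i\overline{f(a_j)}\,c_j=0$ for every $f\in\mathcal{H}^2(B^{m+1})$, so it suffices to exhibit, for each fixed $l$, a monogenic function $f$ with $f(a_l)$ invertible and $f(a_j)=0$ for $j\ne l$ (then $\overline{f(a_l)}c_l=0$ gives $c_l=0$); the monogenic polynomials, which are dense in $\mathcal{H}^2(B^{m+1})$, are the natural supply. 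The case $i=2$ is transparent: taking $f\equiv 1$ gives $c_1=-c_2$, and taking $f=z_\kappa$ for a coordinate $\kappa\in\{1,\ldots,m\}$ along which $a_1$ and $a_2$ differ (or an arbitrary such $\kappa$ if they differ only in the $0$-th coordinate) gives $\big(\overline{z_\kappa(a_1)}-\overline{z_\kappa(a_2)}\big)c_1=0$, where $z_\kappa(a_1)-z_\kappa(a_2)=(a_{1,\kappa}-a_{2,\kappa})e_0-(a_{1,0}-a_{2,0})e_\kappa$ is a nonzero element of $\mathbb{R}^{m+1}$, hence invertible by $x^{-1}=\overline{x}/|x|^2$, so $c_1=c_2=0$. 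I expect the main obstacle to be the general inductive step: since products of monogenic functions need not be monogenic, one cannot simply multiply together the degree-one separators, and one must instead show — by induction on $i$, using the Fueter polynomials $V_{l_1,\ldots,l_k}$ as test functions together with the invertibility of nonzero $\mathbb{R}^{m+1}$-valued differences at each stage — that the point-evaluation functionals at distinct points of $B^{m+1}$ remain strongly independent over $\mathscr{A}_m$. Carrying out this bookkeeping in the presence of non-invertible Clifford coefficients is where the real work lies; once it is done, invertibility of every $\langle T_{a_1,\ldots,a_i},T_{a_1,\ldots,a_i}\rangle$ follows, and the Gram--Schmidt recursion is realizable.
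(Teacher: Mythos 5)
Your reduction is exactly the paper's: by Proposition~\ref{criterion} it suffices to show $\langle T_{a_1,\ldots,a_n},T_{a_1,\ldots,a_n}\rangle c=0\Rightarrow c=0$, the identity $\langle T_{a_1,\ldots,a_n}c,T_{a_1,\ldots,a_n}c\rangle=\overline{c}\,\langle T_{a_1,\ldots,a_n},T_{a_1,\ldots,a_n}\rangle c$ and positivity give $T_{a_1,\ldots,a_n}c=0$, and the triangular structure turns this into $S_{a_n}(x)c+\sum_{i<n}S_{a_i}(x)c_i\equiv0$ with the task of extracting $c=0$. Up to here you track the paper perfectly.

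But the final step — where the actual content lies — is where you stop. Your plan is to pair against monogenic test functions and find, for each $l$, a left monogenic $f$ with $f(a_l)$ invertible and $f(a_j)=0$ for $j\ne l$. You correctly flag that this is problematic: the left monogenic functions do not form a ring, so you cannot manufacture such separators by multiplying degree-one factors, and your fallback (induction with Fueter polynomials $V_{l_1,\ldots,l_k}$ as test functions, keeping track of invertibility of accumulated Clifford coefficients) is left entirely unexecuted. This is a genuine gap: without that argument nothing has been proved, and it is far from clear the bookkeeping you sketch actually closes, precisely because the coefficients generated along the way may be non-invertible Clifford numbers. Note also that you are over-reaching: you only need $c$, the coefficient of the \emph{last} kernel $S_{a_n}$, to vanish, not full right-linear independence, but your interpolation scheme does not see this simplification.

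The paper sidesteps the interpolation problem entirely by using the singularity structure of the Szeg\"o kernels rather than test functions. Each $S_{a_i}$ extends left monogenically to $\mathbb{R}^{m+1}\setminus\{a_i/|a_i|^2\}$, and the Kelvin inversion $a\mapsto a/|a|^2$ is injective, so the singular points are distinct. By the uniqueness theorem the relation $S_{a_n}(x)c+\sum_{i<n}S_{a_i}(x)c_i\equiv0$ extends off the closed ball. Now multiply on the \emph{left} by $(1-\overline{a_n}x)\,|1-\overline{a_n}x|^{m-1}$. Since $1-\overline{a_n}x$ satisfies $(1-\overline{a_n}x)\overline{(1-\overline{a_n}x)}=|1-\overline{a_n}x|^2$ (its non-scalar parts cancel in the product with its conjugate because $\overline{a_n}x\,\overline{x}a_n=|a_n|^2|x|^2$), this factor annihilates $S_{a_n}(x)$ to the constant $1$, giving
\[
c+(1-\overline{a_n}x)\,|1-\overline{a_n}x|^{m-1}\sum_{i<n}S_{a_i}(x)c_i\equiv0 .
\]
Letting $x\to a_n/|a_n|^2$, the prefactor vanishes while the $S_{a_i}$ ($i<n$) stay bounded there, so $c=0$. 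This pole-cancellation argument is both shorter and avoids the obstruction you identified; if you want to keep your framework, this is the missing step to import.
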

\begin{proof}
To show that $\langle T_{a_1,\ldots,a_n}, T_{a_1,\ldots,a_n}\rangle$ is invertible, consider the equation
$$\langle T_{a_1,\ldots,a_n}, T_{a_1,\ldots,a_n}\rangle c=0.$$
By the same argument as that in the proof of Theorem \ref{orthbasis}, we have
\begin{equation}\label{identity}
T_{a_1,\ldots,a_n}(x)c=S_{a_n}(x)c+\sum_{i=1}^{n-1}S_{a_i}(x)c_i\equiv 0
\end{equation}
for some Clifford numbers $c_1,\ldots,c_{n-1}\in\mathscr{A}_m$ and $x\in B^{m+1}$.
Since $S_{a_1},\ldots S_{a_n}$ are of different poles outside the unit sphere, we can show that
$$c=c_1=\ldots=c_{n-1}=0.$$
To be specific, firstly by the uniqueness theorem of monogenic functions we can extend the identity \eqref{identity} to $\mathbb{R}^{m+1}\setminus\{\frac{a_1}{|a_1|^2},\ldots,\frac{a_n}{|a_n|^2}\}$. After multiplying \eqref{identity} by
$$(1-\overline{a_n}x)|1-\overline{a_n}x|^{m-1}$$
from the left-hand side we get
$$c+(1-\overline{a_n}x)|1-\overline{a_n}x|^{m-1}\sum_{i=1}^{n-1}S_{a_i}(x)c_i\equiv 0$$
for all $x\in\mathbb{R}^{m+1}\setminus\{\frac{a_1}{|a_1|^2},\ldots,\frac{a_n}{|a_n|^2}\}$.
Letting $x\rightarrow \frac{a_n}{|a_n|^2}$ we obtain
$c=0$, which implies that $\langle T_{a_1,\ldots,a_n}, T_{a_1,\ldots,a_n}\rangle^{-1}$ exists by Proposition \ref{criterion}.
\end{proof}

\begin{rem}\normalfont
We have checked by calculations that $\langle T_{a_1,\ldots,a_n}, T_{a_1,\ldots,a_n}\rangle$ is a positive real number for $n\leq 5$. We conjecture that it holds for all $n\in\mathbb{N}_+$.
\end{rem}

Hence, $\{B_n\}:=\{B_{a_1,\ldots,a_n}\}:=\{\frac{T_{a_1,\ldots,a_n}}{||T_{a_1,\ldots,a_n}||}\}_{n=1}^\infty$ becomes an orthonormal system for $\mathcal{H}^2(B^{m+1})$.

But if at least two of the parameters are the same, for example,
$a_2$ equals $a_1$, then obviously $T_{a_1,a_2}=T_{a_1,a_1}=0$. At this case we
interpret $B_2$ as $\lim_{\rho\rightarrow 0^+}B_{a_1,b}$ (\cite{QSW}), where
$b=a_1+\rho\omega$, $\omega=\cos\theta_1+\sin\theta_1\cos\theta_2e_1+\sin\theta_1\sin\theta_2\cos\theta_3e_2+\ldots+\sin\theta_1\sin\theta_2\cdots\sin\theta_me_m$,
and $\theta_1, \theta_2, \ldots, \theta_{m-1}\in[0,\pi]$, $\theta_m\in[0,2\pi]$. More precisely,
\begin{align*}
B_2:=&\lim_{\rho\rightarrow 0^+}B_{a_1,b}
\\=&\lim_{\rho\rightarrow 0^+}\frac{T_{a_1,b}}{\|T_{a_1,b}\|}
\\=&\lim_{\rho\rightarrow 0^+}\frac{T_{a_1,b}-T_{a_1,a_1}}{\|T_{a_1,b}-T_{a_1,a_1}\|}
\\=&\lim_{\rho\rightarrow 0^+}\frac{\frac{T_{a_1,b}-T_{a_1,a_1}}{\rho}}
{\|\frac{T_{a_1,b}-T_{a_1,a_1}}{\rho}\|}
\\=&\frac{\nabla_\omega T_{a_1,y}|_{y=a_1}}{\|\nabla_\omega T_{a_1,y}|_{y=a_1}\|}
\\=&\frac{\nabla_\omega S_y|_{y=a_1}-T_{a_1}\langle T_{a_1}, T_{a_1}\rangle^{-1}\langle\nabla_\omega S_y|_{y=a_1}, T_{a_1}\rangle}
{\|\nabla_\omega S_y|_{y=a_1}-T_{a_1}\langle T_{a_1}, T_{a_1}\rangle^{-1}\langle\nabla_\omega S_y|_{y=a_1}, T_{a_1}\rangle\|}
\end{align*}
where
$\nabla_\omega S_y=\frac{\partial S_y}{\partial y_0}\cos\theta_1+\frac{\partial S_y}{\partial y_1}\sin\theta_1\cos\theta_2
+\frac{\partial S_y}{\partial y_2}\sin\theta_1\sin\theta_2\cos\theta_3+\ldots+\frac{\partial S_y}{\partial y_m}
\sin\theta_1\sin\theta_2\cdots\sin\theta_m$ is the directional derivative of $S_y$ with respect to $y$. In other words, when
$a_2=a_1$, $B_2$ is interpreted as the orthonormalization of $T_{a_1}$ and $\nabla_\omega S_y|_{y=a_1}$.

We further note that as a function of $y$, $S_y$ satisfies $S_y\overline{D}=0$, which implies
that $\frac{\partial S_y}{\partial y_0}, \frac{\partial S_y}{\partial y_1},\ldots,\frac{\partial S_y}{\partial y_m}$ are linear dependent
in $\mathcal{H}^2(B^{m+1})$. Hence, if the multiplicity of the parameter $a_n$ (we call the cardinal number of the set
$\{j: a_j=a_n, j\leq n\}$ the multiplicity of $a_n$ and denote it by $m(a_n)$) is greater than $m+1$, then the second order partial
derivatives of $S_y$ at the point $a_n$ should be involved in the orthogonalization process. In general, when
$m(a_n)>\sum_{i=0}^{k-1}\binom{i+m-1}{m-1}=\binom{k+m-1}{m}$, then the
$k$-th order partial derivatives of $S_y$ at the point $a_n$ must appear.

Observe that in complex analysis the TM systems for the unit disc and upper half space can be generated by
Szeg\"{o} or higher order Szeg\"{o} kernels through GS orthogonalization process (\cite{WQ2}), heuristically, we propose the following definition.
\begin{defi}
We call $\{B_n\}_{n=1}^\infty$ the Takenaka--Malmquist system for $B^{m+1}$. If the $k$-th parameter $a_k=0$, then
$B_k$ is called a Blaschke product of order $k-1$ for $B^{m+1}$.
\end{defi}

By the orthogonality of $\{B_n\}_{n=1}^\infty$ and the reproducing property of the Szeg\"{o} kernel we easily get the following property
similar to the complex TM systems:
\begin{pro}
For any $B_{a_1,\ldots,a_n}$ in the TM system, $a_i$ ($i\leq n-1$) is a zero point of $B_{a_1,\ldots,a_n}$ with multiplicity $m(a_i)$.
\end{pro}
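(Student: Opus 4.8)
The plan is to combine the orthogonality that the Gram--Schmidt procedure builds into the family $\{T_{a_1,\ldots,a_j}\}$ with the reproducing identity \eqref{Cauchy}. Write $T_n:=T_{a_1,\ldots,a_n}$; since $B_{a_1,\ldots,a_n}=T_n/\|T_n\|$ differs from $T_n$ only by the positive real factor $\|T_n\|$, it suffices to prove the assertion for $T_n$. Recall from the discussion above that when a parameter repeats, $T_{a_1,\ldots,a_j}$ is read through the confluence limit, so that it is assembled from $S_{a_j}$ together with successive directional-derivative kernels $\nabla_\omega S_y|_{y=a_j},\nabla_\omega^2 S_y|_{y=a_j},\ldots$; accordingly ``$m(a_i)$'' is the number of occurrences of the value $a_i$ among $a_1,\ldots,a_{n-1}$, and ``$a_i$ is a zero of multiplicity $\mu$'' means that the first $\mu$ coefficients of the monogenic (Fueter) expansion about $a_i$ vanish, counted by exhausting the homogeneous parts $\mathcal{M}_0,\mathcal{M}_1,\ldots$ one full degree at a time, with the confluence directions filling a possibly incomplete last degree.

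For the lower bound, Theorem \ref{main} gives that $T_n$ is orthogonal to every $T_{a_1,\ldots,a_i}$ with $i\le n-1$, hence (using additivity of $\langle T_n,\cdot\rangle$ together with $\langle T_n,g\lambda\rangle=\overline{\lambda}\langle T_n,g\rangle$) to the entire right $\mathscr{A}_m$-module $V_{n-1}$ they generate. Inverting the triangular Gram--Schmidt relations shows $V_{n-1}$ is equally generated by the Szeg\"{o}-type kernels attached to $a_1,\ldots,a_{n-1}$, so it contains, at the point $a_i$, the $m(a_i)$ kernels $S_{a_i},\nabla_\omega S_y|_{y=a_i},\ldots$ Each of these is a real linear combination of iterated partials $\partial^k_{a_i}S_{a_i}$, so by \eqref{Cauchy} and linearity the pairing of $\langle T_n,\cdot\rangle$ with each equals the corresponding constant-coefficient (directional) derivative of $T_n$ at $a_i$; orthogonality forces all $m(a_i)$ of them to vanish. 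The combinatorial point is that the single relation $S_y\overline{D}=0$ and its $y$-derivatives are exactly what make the number of independent kernels sitting at $a_i$ — hence the number of vanishing conditions obtained — equal $m(a_i)$, in agreement with the rule ``$m(a)>\binom{k+m-1}{m}\Rightarrow k$-th order partials appear''. Thus $a_i$ is a zero of $T_n$, and so of $B_{a_1,\ldots,a_n}$, of multiplicity at least $m(a_i)$.

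For sharpness, write $T_n=\sigma_n-\mathscr{P}_{V_{n-1}}\sigma_n$, where $\sigma_n=S_{a_n}$ when $a_n$ is new and $\sigma_n$ is the next confluent kernel at $a_n$ otherwise. One must show $T_n$ is not orthogonal to the ``next'' kernel at $a_i$ (the one that the $(m(a_i)+1)$-th occurrence of $a_i$ would contribute), equivalently that the monogenic Hermite interpolant $\mathscr{P}_{V_{n-1}}\sigma_n$ of $\sigma_n$ at the nodes $a_1,\ldots,a_{n-1}$ does not happen to match the one extra derivative datum of $\sigma_n$ at $a_i$. I would deduce this from a unisolvence (linear-independence) statement for Szeg\"{o} kernels and their partial derivatives at distinct points of $B^{m+1}$ — the crudest instance being that $\sigma_n$ has a pole at $a_n/|a_n|^2$, a point where every element of $V_{n-1}$ is monogenic, so $\sigma_n\notin V_{n-1}$ — carried out at the level of the derivative data and reconciled once more with the $S_y\overline{D}=0$ dependency so that the dimension counts match. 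This is where I expect the real work: the lower bound is essentially the short reproducing-kernel argument already familiar from the complex case, whereas excluding a higher-order zero at $a_i$ requires genuine non-degeneracy of Hermite interpolation by Szeg\"{o} kernels. Finally I would confirm that the borderline case $a_n=a_i$ for some $i\le n-1$ — in which $B_{a_1,\ldots,a_n}$ is itself a confluent limit — is covered by the same reasoning.
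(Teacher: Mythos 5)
Your core argument for the vanishing — that $T_n$ is orthogonal to the right $\mathscr{A}_m$-module $V_{n-1}$ generated by $T_{a_1},\dots,T_{a_{n-1}}$, that inverting the triangular Gram--Schmidt relations identifies $V_{n-1}$ with the module generated by the Szeg\"o and confluent derivative kernels at $a_1,\dots,a_{n-1}$, and that \eqref{Cauchy} converts each of those orthogonality relations into the vanishing of a directional derivative of $T_n$ at $a_i$ — is precisely what the paper compresses into the single sentence ``by the orthogonality of $\{B_n\}$ and the reproducing property of the Szeg\"o kernel we easily get.'' The paper supplies no further proof, so your lower-bound paragraph recovers and fleshes out the intended reasoning, and that part is correct.

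Where you go beyond the paper is in attempting sharpness. The paper asserts multiplicity $m(a_i)$ but does nothing to exclude a higher-order zero; in the complex case this is free because $B_n$ has an explicit Blaschke-product formula whose divisor is visible, whereas in the Clifford setting no such formula exists. You are right that a genuine proof of ``exactly $m(a_i)$'' would need a unisolvence / non-degenerate-Hermite-interpolation lemma for Szeg\"o kernels and their derivatives, which neither you nor the paper establish; the pole-location observation you give only shows $\sigma_n\notin V_{n-1}$ and does not by itself control the extra derivative datum at $a_i$. Connected to this, you had to invent a precise notion of ``multiplicity of a zero of a Clifford monogenic function'' (count vanishing Fueter coefficients degree by degree, with the confluence directions $\omega$ filling a possibly incomplete last degree). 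The paper never defines one, and your convention is direction-dependent, which is not a standard meaning of multiplicity. So the honest statement of what is actually proved — by you and, implicitly, by the paper — is that the $m(a_i)$ derivative data prescribed by the confluent kernels vanish at $a_i$, i.e.\ multiplicity \emph{at least} $m(a_i)$ under your convention; the stronger equality remains an open point, which you rightly flagged.
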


For the cases of half space and general domains (provided that the Szeg\"{o} kernels exist) we have similar results. Let
$\mathbb{R}^{m+1}_+:=\{x\in \mathbb{R}^{m+1}: {\rm Sc}\,x>0\}$ be the half space in $\mathbb{R}^{m+1}$, the Szeg\"{o} kernel we use for $\mathbb{R}^{m+1}_+$
is
$$S_a(x)=\frac{\overline{x+\overline{a}}}
{|x+\overline{a}|^{m+1}},\quad x, a\in\mathbb{R}^{m+1}_+.$$

\section{Adaptive Clifford TM system approximation}
Let us first have a brief review of the one complex variable adaptive TM system approximation. Consider the complex Hardy space $\mathcal{H}^2(\D)$, where $\D$ denotes the unit disc in the complex plane. For $f\in\mathcal{H}^2(\D)$, there exists an adaptive TM system approximation of $f,$ expressed as
$$f=\sum_{k=1}^\infty\langle f, B_k\rangle B_k=\sum_{k=1}^\infty\langle f_k, B_k\rangle B_k=\sum_{k=1}^\infty\langle g_k, e_{a_k}\rangle B_k,$$
where $\{B_k(z)\}_{k=1}^\infty$ is the Takenaka--Malmquist (TM) system on $\D$ determined by a sequence $\{a_k\}_{k=1}^\infty$ in $\D$
being specially selected according to the Maximal Selection Principle (see below) of the context,
$$B_k(z)=\frac{\sqrt{1-|a_k|^2}}{z-a_k}\prod_{l=1}^{k}\frac{z-a_l}{1-\overline{a_l}z},$$
$f_k$ is the $k$-th \emph{standard remainder}, defined by
$$f_k:=f-\sum_{l=1}^{k-1}\langle f, B_l\rangle B_l=f-\sum_{l=1}^{k-1}\langle g_l, e_{a_l}\rangle B_l,$$
and $g_l$ is the $l$-th \emph{reduced remainder}, defined by
$$g_l(z)=f_l(z)\prod_{j=1}^{l-1}\frac{1-\overline{a_j}z}{z-a_j},$$
and
$$e_{a_l}(z)=\frac{\sqrt{1-|a_l|^2}}{1-\overline{a_l}z}$$
being the normalized Szeg\"{o} kernel of $\D$, that plays the role as reproducing kernel of the Hilbert space $\mathcal{H}^2(\D)$. When
$a_1,a_2,\ldots$ are mutually different, $B_1,B_2,\ldots$ are consecutively GS orthonormalizations of $e_{a_1},e_{a_2},\ldots$; and if
$a_1,a_2,\ldots$ have multiples, in the GS process $e_{a_1},e_{a_2},\ldots$ are replaced by the so called higher order Szeg\"o kernels being corresponding derivatives of the Szeg\"o kernels (\cite{QW}).

The approximation is said to be adaptive because for each $k$ the parameter $a_k$ in the defined TM system is adaptively chosen to best match the $k$-th reduced reminder $g_k$ which amounts to selecting $a_k$ according to the Maximum Selection Principle
$$a_k=\mathop{\arg\max}\limits_{a\in D}|\langle g_k, e_a\rangle|^2=\mathop{\arg\max}\limits_{a\in D}(1-|a|^2)|g_k(a)|^2.$$
Note that if all the parameters are zero, then the TM system reduces to the half Fourier system. If $a_k=0$, then $B_k$ becomes a Blaschke product. If the first parameter $a_1$ is chosen to be zero, then we get an adaptive mono-components decomposition, i.e., every $B_k$ is a mono-component, or, in other words, each $B_k$ possesses a non-negative analytic instantaneous frequency function. The case for the upper half plane is similar, and is discussed in \cite{Qian1}.

The advantages of adaptive TM system approximation over the usual greedy algorithms (\cite{DMA,Tem1,Tem2}) include that at each step the former achieves the optimal energy pursuit and at the same time produces an added new term to possess positive analytic frequency. Such optimal matching pursuit method has been extended to general Hilbert spaces with a dictionary satisfying the so called boundary vanishing condition (\cite{Qian2}).

Generalization of adaptive TM system approximation into multivariate functions has been following two routes. One is for several complex variables (\cite{ACQS1, Qian2}), the other is for several real variables in the frame work of quaternionic (\cite{QSW}) and Clifford analysis. In the context of several complex variables, in \cite{ACQS1},  the Drury--Arveson space of functions analytic in the unit ball of $\mathbb{C}^N$ is discussed. In \cite{Qian2}, in the context of the $n$-torus $T^n$, two different approaches are discussed, of which one uses product-TM systems  and the other uses the product-Szeg\"{o} kernel dictionaries. The several complex variables contexts are commutative, with the invertibility inherited from the complex numbers, that all together make the usual GS orthogonalization process applicable.   The case for matrix-valued functions was studied in \cite{ACQS2}.

Since the Euclidean space $\mathbb{R}^n$ can be naturally embedded into quaternions or a Clifford algebra, it is natural to perform quaternionic or Clifford GS orthogonalization process in order to construct an analogous adaptive approximation theory. Without a GS orthogonalization process and without special functions playing a similar rope as Blaschke products, what have been achieved are only the greedy type algorithms (\cite{QWY,WQ}). The Clifford TM system constructed in \S 4 plays a definitive role in adaptive TM system approximation in several real variables in the frame work of Clifford monogenic functions.

Let $f\in\mathcal{H}^2(B^{m+1})$. We associate $f$ with the Fourier-type series
$$f(x)\sim\sum_{n=1}^\infty B_n(x)c_n,$$
where the coefficients $c_n$'s are given by
$$c_1=\langle B_1, B_1\rangle^{-1}\langle f, B_1\rangle=(1-|a_1|^2)^{\frac{m}{2}}f(a_1),$$
and for $n\geq 2$,
\begin{align*}
c_n=&\langle B_n, B_n\rangle^{-1}\langle f, B_n\rangle
\\=&\langle B_n, B_n\rangle^{-1}\left\langle f, \frac{S_{a_n}-\sum_{i=1}^{n-1}T_{a_1,\ldots,a_i}
\langle T_{a_1,\ldots,a_i}, T_{a_1,\ldots,a_i}\rangle^{-1}\langle S_{a_n}, T_{a_1,\ldots,a_i}\rangle}
{\|T_{a_1,\ldots,a_n}\|}\right\rangle
\\=&\langle B_n, B_n\rangle^{-1}\frac{\left\langle f-\sum_{i=1}^{n-1}T_{a_1,\ldots,a_i}\langle T_{a_1,\ldots,a_i}, T_{a_1,\ldots,a_i}\rangle^{-1}
\langle f, T_{a_1,\ldots,a_i}\rangle, S_{a_n}\right\rangle}{\|T_{a_1,\ldots,a_n}\|}.
\end{align*}
Let
\begin{align*}
f_n(x)&=f(x)-\sum_{i=1}^{n-1}T_{a_1,\ldots,a_i}(x)\langle T_{a_1,\ldots,a_i}, T_{a_1,\ldots,a_i}\rangle^{-1}
\langle f, T_{a_1,\ldots,a_i}\rangle
\\&=f(x)-\sum_{i=1}^{n-1}B_i(x)\langle B_i, B_i\rangle^{-1}\langle f, B_i\rangle.
\end{align*}
If $m(a_n)=1$, then
\begin{align}
c_n=\frac{\langle B_n, B_n\rangle^{-1}}{\|T_{a_1,\ldots,a_n}\|}f_n(a_n)
=\|T_{a_1,\ldots,a_n}\|\langle T_{a_1,\ldots,a_n}, T_{a_1,\ldots,a_n}\rangle^{-1}f_n(a_n),
\end{align}
\begin{align}
\|B_nc_n\|^2&={\rm Sc}\langle B_n c_n, B_n c_n\rangle\nonumber
\\&={\rm Sc}\left\langle B_n\frac{\langle B_n, B_n\rangle^{-1}}{\|T_{a_1,\ldots,a_n}\|}f_n(a_n),
B_n\frac{\langle B_n, B_n\rangle^{-1}}{\|T_{a_1,\ldots,a_n}\|}f_n(a_n)\right\rangle\nonumber
\\&={\rm Sc}(\overline{f_n(a_n)}\frac{\langle B_n, B_n\rangle^{-1}}{\|T_{a_1,\ldots,a_n}\|^2}f_n(a_n))\nonumber
\\&={\rm Sc}(\overline{f_n(a_n)}\langle T_{a_1,\ldots,a_n}, T_{a_1,\ldots,a_n}\rangle^{-1}f_n(a_n))\nonumber
\\&={\rm Sc}((1-|a_n|^2)^{m}\overline{f_n(a_n)}((1-|a_n|^2)^{m}\langle T_{a_1,\ldots,a_n}, T_{a_1,\ldots,a_n}\rangle)^{-1}f_n(a_n)),\label{energy}
\end{align}
and
\begin{align}
&(1-|a_n|^2)^{m}\langle T_{a_1,\ldots,a_n}, T_{a_1,\ldots,a_n}\rangle\nonumber
\\=&(1-|a_n|^2)^{m}\left(\langle S_{a_n}, S_{a_n}\rangle
-\sum_{i=1}^{n-1}\langle T_{a_1,\ldots,a_i}, S_{a_n}\rangle\langle T_{a_1,\ldots,a_i}, T_{a_1,\ldots,a_i}\rangle^{-1}
\overline{\langle T_{a_1,\ldots,a_i}, S_{a_n}\rangle}\right)\nonumber
\\=&1-(1-|a_n|^2)^{m}\sum_{i=1}^{n-1}T_{a_1,\ldots,a_i}(a_n)\langle T_{a_1,\ldots,a_i}, T_{a_1,\ldots,a_i}\rangle^{-1}
\overline{T_{a_1,\ldots,a_i}(a_n)}.\label{inner}
\end{align}
If $m(a_n)>1$, $c_n$ and $\|B_nc_n\|^2$ are taken in the limit sense as before.

\begin{lem}\label{limit1}
Let $a_1,\ldots,a_{n-1}\in B^{m+1}$ be fixed, $a=|a|\xi=r\xi$, then
$$\lim_{r\rightarrow 1^-}\|B_{a_1,\ldots,a_{n-1},a}\langle B_{a_1,\ldots,a_{n-1},a}, B_{a_1,\ldots,a_{n-1},a}\rangle^{-1}\langle f, B_{a_1,\ldots,a_{n-1},a}\rangle\|^2=0$$
holds uniformly in $|\xi|=1$.
\end{lem}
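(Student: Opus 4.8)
The plan is to reduce the claimed limit to an estimate on the quantity $\|B_{a_1,\ldots,a_{n-1},a}c\|^2$ with $c=\langle B_{a_1,\ldots,a_{n-1},a},B_{a_1,\ldots,a_{n-1},a}\rangle^{-1}\langle f,B_{a_1,\ldots,a_{n-1},a}\rangle$, exactly the quantity computed in \eqref{energy}–\eqref{inner} with $a_n$ replaced by $a=r\xi$. By that computation (taking $m(a)=1$, which holds as $a\to\partial B^{m+1}$ so $a\notin\{a_1,\ldots,a_{n-1}\}$ eventually),
\[
\|B_{a_1,\ldots,a_{n-1},a}c\|^2={\rm Sc}\Big((1-|a|^2)^{m}\overline{f_n(a)}\big((1-|a|^2)^{m}\langle T_{a_1,\ldots,a_{n-1},a},T_{a_1,\ldots,a_{n-1},a}\rangle\big)^{-1}f_n(a)\Big),
\]
where $f_n$ is the standard remainder built from the \emph{fixed} data $a_1,\ldots,a_{n-1}$ (hence independent of $a$). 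So I need two ingredients: first, that the ``normalized Gram scalar'' $G(a):=(1-|a|^2)^{m}\langle T_{a_1,\ldots,a_{n-1},a},T_{a_1,\ldots,a_{n-1},a}\rangle$ stays bounded away from being non-invertible as $r\to1^-$, uniformly in $\xi$; and second, that $(1-|a|^2)^{m/2}f_n(a)\to0$ uniformly in $\xi$.

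For the first ingredient I would use formula \eqref{inner}: $G(a)=1-(1-|a|^2)^{m}\sum_{i=1}^{n-1}T_{a_1,\ldots,a_i}(a)\langle T_{a_1,\ldots,a_i},T_{a_1,\ldots,a_i}\rangle^{-1}\overline{T_{a_1,\ldots,a_i}(a)}$. Each $T_{a_1,\ldots,a_i}$ is a fixed finite linear combination of Szegő kernels $S_{a_j}$ and their derivatives with poles at $a_j/|a_j|^2$, all lying strictly outside $\overline{B^{m+1}}$; hence $T_{a_1,\ldots,a_i}$ is bounded on $\overline{B^{m+1}}$, so $(1-|a|^2)^{m}|T_{a_1,\ldots,a_i}(a)|^2\to0$ uniformly in $\xi$ as $r\to1^-$. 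Therefore $G(a)\to1$ uniformly in $\xi$; in particular $G(a)$ is invertible with $|G(a)^{-1}|$ bounded for $r$ close to $1$. (One can also invoke the Remark that $\langle T,T\rangle$ is a positive real for $n\le5$, but the boundedness argument above does not need that and works in all dimensions.)

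For the second ingredient, note $f_n=f-\sum_{i=1}^{n-1}B_i\langle B_i,B_i\rangle^{-1}\langle f,B_i\rangle\in\mathcal H^2(B^{m+1})$ is a fixed monogenic Hardy-space function. I would use the reproducing identity \eqref{Cauchy}: $f_n(a)=\langle f_n,S_a\rangle$, and the Cauchy–Schwarz estimate from Proposition \ref{IPS1} gives $|f_n(a)|=|\langle f_n,S_a\rangle|\le 2^{m/2}\|f_n\|\,\|S_a\|$, with $\|S_a\|^2={\rm Sc}\langle S_a,S_a\rangle={\rm Sc}\,S_a(a)$. A direct computation of the Szegő kernel norm for the ball gives $\|S_a\|^2\asymp(1-|a|^2)^{-m}$, so $(1-|a|^2)^{m/2}|f_n(a)|\le C\|f_n\|$ is bounded but that alone is not quite decay. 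To get decay one uses that $\|f_n\|^2=\sum_{k\ge n}$ (tail of the spherical-monogenic expansion of $f$), or more simply that $f_n$ has no pole on $\overline{B^{m+1}}$-worth of mass concentrated near the boundary: precisely, writing $f_n$ via its boundary values, $(1-|a|^2)^{m/2}|f_n(a)|=(1-|a|^2)^{m/2}|\langle f_n,S_a\rangle|$ and the family $\{(1-|a|^2)^{m/2}S_a\}$ tends weakly to $0$ in $\mathcal H^2(B^{m+1})$ as $|a|\to1$ (test against spherical monogenics, a dense set), uniformly in direction by rotation invariance of the norm; hence $(1-|a|^2)^{m/2}\langle f_n,S_a\rangle\to0$ uniformly in $\xi$.

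Combining: $\|B_{a_1,\ldots,a_{n-1},a}c\|^2={\rm Sc}\big((1-|a|^2)^{m/2}\overline{f_n(a)}\,G(a)^{-1}\,(1-|a|^2)^{m/2}f_n(a)\big)$, which is bounded by $2^{m/2}\big((1-|a|^2)^{m/2}|f_n(a)|\big)^2|G(a)^{-1}|\to0$ uniformly in $|\xi|=1$, as desired. The case $m(a_n)>1$ is handled by the limiting interpretation of $B_n$ described before the Lemma; since the derivatives $\nabla_\omega S_y$ etc.\ are again fixed Hardy-space functions and the corresponding Gram scalars converge to $1$ by the same pole-location argument, the same estimate applies. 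The main obstacle I anticipate is establishing the uniform-in-$\xi$ weak convergence $(1-|a|^2)^{m/2}S_a\to0$: one must check it against a dense family (spherical monogenics) and control the boundary term in $f_n$ carefully, since the Clifford-valued inner product and the non-commutativity require using the scalar-part Cauchy–Schwarz inequality (Proposition \ref{IPS1}) rather than the complex one; everything else is bookkeeping with the explicit kernel norms.
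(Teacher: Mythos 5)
Your proof follows the paper's argument exactly: both reduce via \eqref{energy}--\eqref{inner} to showing that the normalized Gram quantity $G(a)=(1-|a|^2)^{m}\langle T_{a_1,\ldots,a_{n-1},a},T_{a_1,\ldots,a_{n-1},a}\rangle$ tends to $1$ uniformly in $\xi$, and that $(1-|a|^2)^{m/2}f_n(a)\to 0$ uniformly in $\xi$. The only difference is that the paper disposes of the second ingredient by citing Lemma 3.2 of \cite{QWY}, whereas you re-derive it via density of spherical monogenics and the uniform bound $\|(1-r^2)^{m/2}S_{r\xi}\|=1$ --- a correct and essentially self-contained reproduction of the cited lemma (modulo a harmless constant: the final Cauchy--Schwarz-type bound should carry $2^{m}$ rather than $2^{m/2}$, from applying the submultiplicativity estimate twice), not a genuinely different route.
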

\begin{proof}
Note that when $r\rightarrow 1^-$, $a$ must be different from $a_i$ ($i\leq n-1$), then \eqref{inner} clearly shows that
$$\lim_{r\rightarrow 1^-}(1-|a|^2)^{m}\langle T_{a_1,\ldots,a_{n-1},a}, T_{a_1,\ldots,a_{n-1},a}\rangle=1.$$ On the other hand, according
to Lemma 3.2 in \cite{QWY} we have
$$\lim_{r\rightarrow 1^-}(1-|a|^2)^{\frac{m}{2}}f_n(a)=0$$
uniformly in $|\xi|=1$.
So from \eqref{energy} we immediately get the desired result.
\end{proof}

Lemma \ref{limit1} implies

\begin{theo}[Maximum Selection Principle]
For any $f\in\mathcal{H}^2(B^{m+1})$ and any fixed $a_1,\ldots,a_{n-1}\in B^{m+1}$, there exist an $a_n\in B^{m+1}$ such that
\begin{align}\label{MSP}
&\|B_{a_1,\ldots,a_{n-1},a_n}\langle B_{a_1,\ldots,a_{n-1},a_n}, B_{a_1,\ldots,a_{n-1},a_n}\rangle^{-1}
\langle f, B_{a_1,\ldots,a_{n-1},a_n}\rangle\|\nonumber\\=&\sup_{a\in B^{m+1}}\|B_{a_1,\ldots,a_{n-1},a}\langle B_{a_1,\ldots,a_{n-1},a}, B_{a_1,\ldots,a_{n-1},a}\rangle^{-1}\langle f, B_{a_1,\ldots,a_{n-1},a}\rangle\|
\end{align}
\end{theo}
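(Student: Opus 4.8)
The plan is to reduce the Maximum Selection Principle to a compactness argument on the closed ball $\overline{B^{m+1}}$, exactly as in the one complex variable situation. Define the target functional
$$
\Phi(a):=\bigl\|B_{a_1,\ldots,a_{n-1},a}\langle B_{a_1,\ldots,a_{n-1},a}, B_{a_1,\ldots,a_{n-1},a}\rangle^{-1}\langle f, B_{a_1,\ldots,a_{n-1},a}\rangle\bigr\|
$$
for $a\in B^{m+1}$ with $a\neq a_i$ ($i\le n-1$), and note that by the identity \eqref{energy} this equals $\bigl({\rm Sc}(\overline{f_n(a)}\,\langle T_{a_1,\ldots,a_{n-1},a}, T_{a_1,\ldots,a_{n-1},a}\rangle^{-1} f_n(a))\bigr)^{1/2}$ whenever $m(a)=1$. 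First I would record that $\Phi$ is continuous on $B^{m+1}\setminus\{a_1,\ldots,a_{n-1}\}$: the Szeg\"o kernel $S_a$ and its derivatives depend continuously (indeed real-analytically) on $a$ in $\mathcal{H}^2(B^{m+1})$, the finitely many inner products $\langle T_{a_1,\ldots,a_i}, S_a\rangle$ and $\langle f, S_a\rangle$ are therefore continuous in $a$, and by \eqref{inner} the scalar $(1-|a|^2)^m\langle T_{a_1,\ldots,a_{n-1},a}, T_{a_1,\ldots,a_{n-1},a}\rangle$ is continuous and, as shown in the proof of the previous theorem, stays invertible; hence $\Phi$ is a composition of continuous maps.

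Next I would handle the exceptional points. Near each $a_i$ ($i\le n-1$) the value $\Phi$ should be \emph{extended} by its limit, which is the norm of the orthogonal projection of $f_n$ onto the higher order Szeg\"o kernel obtained from the $\nabla_\omega$-type derivatives, exactly the limiting interpretation of $B_k$ given in \S 4; continuity of this extension at $a_i$ follows from the same real-analytic dependence together with l'H\^opital-type expansions already used there. With these finitely many removable points filled in, $\Phi$ becomes continuous on all of $B^{m+1}$.

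The decisive step is the boundary behaviour, and this is where I expect the main obstacle. Lemma \ref{limit1} gives $\lim_{r\to 1^-}\Phi(r\xi)=0$ uniformly in $|\xi|=1$; therefore, extending $\Phi$ by $0$ on $S^m=\partial B^{m+1}$ yields a continuous function on the \emph{compact} set $\overline{B^{m+1}}$. By the extreme value theorem $\Phi$ attains its supremum on $\overline{B^{m+1}}$ at some point $a^\ast$. If $\Phi\equiv 0$ the statement is trivial (any $a_n$ works, e.g. $a_n=0$); otherwise $\sup\Phi>0$, so $a^\ast\notin S^m$ by the vanishing boundary limit, hence $a^\ast\in B^{m+1}$, and we set $a_n:=a^\ast$. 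This $a_n$ satisfies \eqref{MSP}. The only genuine subtlety is confirming that the value assigned to $\Phi$ at the removable points $a_i$ really is the supremum-relevant quantity, i.e. that the limiting $B_k$ constructed via derivatives of $S_y$ is the correct object to compare against; this is precisely the content of the limit computation in \S 4 and can be invoked directly. Beyond that, the argument is the standard ``continuous functional on a compact set, vanishing on the boundary'' scheme, transplanted to the Clifford module setting using the invertibility guaranteed by Proposition \ref{criterion}.
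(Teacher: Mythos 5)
Your proposal is correct and is essentially the paper's own argument: the paper proves the theorem merely by stating ``Lemma \ref{limit1} implies,'' and the boundary-vanishing-plus-compactness scheme you spell out (continuity of $\Phi$ on $B^{m+1}$ after filling in the removable points, uniform vanishing on $S^m$ from Lemma \ref{limit1}, and the extreme value theorem on $\overline{B^{m+1}}$) is precisely what that terse citation is meant to invoke. The only thing worth flagging is that you have supplied more of the routine detail (continuity in $a$, the handling of repeated parameters via the limiting $B_k$) than the paper records, which is a feature rather than a discrepancy.
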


The maximum selection principle enables us to obtain the best approximation to $f$ step by step,
by choosing a suitable parameter $a_n$ at the $n$-th step such that the energy of the $n$-th term
$B_n\langle B_n, B_n\rangle^{-1}\langle f, B_n\rangle$ attains its maximum,
or equivalently, making the energy of the residue $f_n$ attain its minimum,
so that the adaptive Fourier series associated to $f$ converges in a fast way.
Note that the choice of $a_n$ in \eqref{MSP} may not be unique.

We now proceed to prove the convergence of the adaptive Fourier series. First we show a technical lemma.

\begin{lem}\label{ineq1}
For any $a_1,\ldots,a_n\in B^{m+1}$, we have
\begin{align*}
\|B_{a_1,\ldots,a_n}\langle B_{a_1,\ldots,a_n}, B_{a_1,\ldots,a_n}\rangle^{-1}\langle f, B_{a_1,\ldots,a_n}\rangle\|
&\geq\|B_{a_n}\langle B_{a_n}, B_{a_n}\rangle^{-1}\langle f_n, B_{a_n}\rangle\|
\\&=|\langle f_n, B_{a_n}\rangle|
\\&=(1-|a_n|^2)^{\frac{m}{2}}|f_n(a_n)|.
\end{align*}
\end{lem}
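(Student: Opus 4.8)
The plan is to identify each of the three expressions as the norm of an orthogonal projection of a suitable function onto the right $\mathscr{A}_m$-submodule generated by a single kernel, and then to compare these projections. For $g\in\mathscr{H}$ write $\mathscr{P}_g:=\mathscr{P}_{\overline{\mbox{\scriptsize span}}\{g\}}$ for the projection furnished by Lemma \ref{OP}; by Lemma \ref{completeness} one has $\overline{\mbox{\scriptsize span}}\{g\}=\{gc:c\in\mathscr{A}_m\}$, so that $\|h-\mathscr{P}_g h\|=\inf_{c\in\mathscr{A}_m}\|h-gc\|$. Put $T_n:=T_{a_1,\ldots,a_n}$, $B_n:=B_{a_1,\ldots,a_n}=T_n/\|T_n\|$, and let $B_{a_n}:=S_{a_n}/\|S_{a_n}\|$ be the normalized Szeg\"{o} kernel at $a_n$. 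Since $a_n\in B^{m+1}\subset\mathbb{R}^{m+1}$ we have $\overline{a_n}a_n=|a_n|^2$, so \eqref{Cauchy} with $k=0$ gives $\langle S_{a_n},S_{a_n}\rangle=S_{a_n}(a_n)=(1-|a_n|^2)^{-m}$, a positive real number; hence $B_{a_n}=(1-|a_n|^2)^{m/2}S_{a_n}$ and $\langle B_{a_n},B_{a_n}\rangle=1$. Then $B_{a_n}\langle B_{a_n},B_{a_n}\rangle^{-1}\langle f_n,B_{a_n}\rangle=B_{a_n}\langle f_n,B_{a_n}\rangle$, whose squared norm equals ${\rm Sc}(\overline{\langle f_n,B_{a_n}\rangle}\,\langle B_{a_n},B_{a_n}\rangle\,\langle f_n,B_{a_n}\rangle)=|\langle f_n,B_{a_n}\rangle|^2$, which is the first equality; and conjugate-linearity of $\langle\cdot,\cdot\rangle$ in the second variable together with \eqref{Cauchy} gives $\langle f_n,B_{a_n}\rangle=(1-|a_n|^2)^{m/2}\langle f_n,S_{a_n}\rangle=(1-|a_n|^2)^{m/2}f_n(a_n)$, which is the second equality. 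In particular the right-hand side equals $\|\mathscr{P}_{S_{a_n}}f_n\|$.

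Next I recast the left-hand side. Since $\|T_n\|$ is a positive real scalar and $\langle T_n,T_n\rangle$ is invertible (as shown in \S 4), a direct computation together with Lemma \ref{OP}, exactly as in the proof of Theorem \ref{orthbasis}, gives $B_n\langle B_n,B_n\rangle^{-1}\langle f,B_n\rangle=T_n\langle T_n,T_n\rangle^{-1}\langle f,T_n\rangle=\mathscr{P}_{T_n}f$; one must here retain the full Clifford number $\langle B_n,B_n\rangle=\langle T_n,T_n\rangle/\|T_n\|^2$, which in general is not a scalar. By Theorem \ref{main} the $T_i$ are mutually orthogonal; put $V_{n-1}:=\overline{\mbox{\scriptsize span}}\{T_1,\ldots,T_{n-1}\}$, a closed right $\mathscr{A}_m$-submodule. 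Then $T_n\perp V_{n-1}$, and since $f-f_n\in V_{n-1}$ while the $B_i$ are orthogonal, also $\langle f,T_n\rangle=\langle f_n,T_n\rangle$ and $f_n\perp V_{n-1}$. Hence the left-hand side equals $\|\mathscr{P}_{T_n}f_n\|$, and the lemma is reduced to showing $\|\mathscr{P}_{T_n}f_n\|\ge\|\mathscr{P}_{S_{a_n}}f_n\|$.

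To prove this, observe that since ${\rm Sc}\langle\cdot,\cdot\rangle$ is a genuine real inner product on $\mathscr{H}$ and, by Lemma \ref{OP}, $h-\mathscr{P}_g h$ is orthogonal to $g$ \emph{as a Clifford number} (so the cross term vanishes entirely), Pythagoras gives $\|\mathscr{P}_g h\|^2=\|h\|^2-\|h-\mathscr{P}_g h\|^2$; it therefore suffices to prove $\inf_c\|f_n-T_nc\|\le\inf_c\|f_n-S_{a_n}c\|$. Suppose first that $m(a_n)=1$, so $T_n$ is the orthogonalization of $S_{a_n}$ against $V_{n-1}$ and, by the Gram--Schmidt construction, $r:=S_{a_n}-T_n$ is a linear combination of $T_1,\ldots,T_{n-1}$, hence lies in $V_{n-1}$. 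For any $c\in\mathscr{A}_m$ write $S_{a_n}c=T_nc+rc$; then $rc\in V_{n-1}$, whereas $f_n-T_nc\perp V_{n-1}$ since both $f_n$ and $T_nc$ are, so one more application of Pythagoras gives $\|f_n-S_{a_n}c\|^2=\|f_n-T_nc\|^2+\|rc\|^2\ge\|f_n-T_nc\|^2\ge\inf_{c'}\|f_n-T_nc'\|^2$; taking the infimum over $c$ completes this case. If instead $m(a_n)>1$ then $S_{a_n}\in V_{n-1}$, so $f_n(a_n)=\langle f_n,S_{a_n}\rangle=0$, the right-hand side is $0$, and the inequality holds trivially.

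Conceptually the argument is just the Hilbert-space fact that the projection onto a larger subspace has larger norm, together with the two reproducing-kernel identities on the right. The step needing the most care --- the nearest thing to an obstacle --- is that $\langle T_n,T_n\rangle$ and $\langle B_n,B_n\rangle$ need not be scalars (they are only conjectured to be positive reals), so the usual one-complex-variable manipulations are unavailable and the whole argument must be routed through the orthogonal projections of Lemma \ref{OP}; the Pythagorean identities used above are legitimate precisely because that lemma supplies the \emph{Clifford}-valued orthogonality $\langle\alpha-\mathscr{P}_\beta\alpha,\beta\rangle=0$, and not merely the vanishing of its scalar part.
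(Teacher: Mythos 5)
Your argument is correct, and the two right-hand equalities are handled exactly as the paper would (by the reproducing property \eqref{Cauchy} with $k=0$, which yields $\langle S_{a_n},S_{a_n}\rangle=(1-|a_n|^2)^{-m}$ and hence $\langle B_{a_n},B_{a_n}\rangle=1$), though the paper leaves them unjustified. For the inequality, however, your mechanism differs from the paper's. The paper observes that the projection of $f_n$ onto $\overline{\mathrm{span}}\{S_{a_1},\ldots,S_{a_n}\}$ is independent of the order in which the $a_i$ are fed to the Gram--Schmidt process, reorders so that $a_n$ comes first, expands the squared norm of this projection as a sum of $n$ non-negative orthogonal pieces, and reads off the inequality because the first piece after reordering is exactly $\|B_{a_n}\langle B_{a_n},B_{a_n}\rangle^{-1}\langle f_n,B_{a_n}\rangle\|^2$. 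You instead compare the one-dimensional projections onto the submodules generated by $T_n$ and by $S_{a_n}$ directly, using the Pythagoras identity from Lemma \ref{OP} and the decomposition $S_{a_n}=T_n+r$ with $r\in V_{n-1}\perp f_n, T_n c$. Both are valid; your route is arguably more self-contained, as it avoids the (implicit in the paper) claim that the projection onto the multi-dimensional submodule exists and is permutation-invariant with the associated Bessel-type expansion, and instead reduces everything to the single-kernel projection that Lemma \ref{OP} actually establishes. Your separate treatment of the case $m(a_n)>1$ (where $S_{a_n}\in V_{n-1}$ forces $f_n(a_n)=0$, making the claim trivial) is a sensible precaution that the paper glosses over.
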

\begin{proof}
Since $f_n$ and $B_n$ are both orthogonal to $B_1, B_2, \ldots, B_{n-1}$, we have
\begin{align}
&\|B_{a_1,\ldots,a_n}\langle B_{a_1,\ldots,a_n}, B_{a_1,\ldots,a_n}\rangle^{-1}\langle f, B_{a_1,\ldots,a_n}\rangle\|^2\nonumber
\\=&\|B_{a_1,\ldots,a_n}\langle B_{a_1,\ldots,a_n}, B_{a_1,\ldots,a_n}\rangle^{-1}\langle f_n, B_{a_1,\ldots,a_n}\rangle\|^2\nonumber
\\=&\|B_{a_1}\langle B_{a_1}, B_{a_1}\rangle^{-1}\langle f_n, B_{a_1}\rangle\|^2
+\|B_{a_1,a_2}\langle B_{a_1,a_2}, B_{a_1,a_2}\rangle^{-1}\langle f_n, B_{a_1,a_2}\rangle\|^2\nonumber
\\&+\ldots+
\|B_{a_1,\ldots,a_n}\langle B_{a_1,\ldots,a_n}, B_{a_1,\ldots,a_n}\rangle^{-1}\langle f_n, B_{a_1,\ldots,a_n}\rangle\|^2.\label{cyclic}
\end{align}
Note that for any $f\in\mathcal{H}^2(B^{m+1})$, the orthogonal projection of $f$ onto the space spanned by $B_1, B_2, \ldots, B_n$
is uniquely determined by $a_1,\ldots,a_n$, regardless of their orders. So
\begin{align*}
&B_{a_1}\langle B_{a_1}, B_{a_1}\rangle^{-1}\langle f_n, B_{a_1}\rangle
+B_{a_1,a_2}\langle B_{a_1,a_2}, B_{a_1,a_2}\rangle^{-1}\langle f_n, B_{a_1,a_2}\rangle
\\&+\ldots+B_{a_1,\ldots,a_n}\langle B_{a_1,\ldots,a_n}, B_{a_1,\ldots,a_n}\rangle^{-1}\langle f_n, B_{a_1,\ldots,a_n}\rangle
\\=&B_{a_n}\langle B_{a_n}, B_{a_n}\rangle^{-1}\langle f_n, B_{a_n}\rangle
+B_{a_n,a_1}\langle B_{a_n,a_1}, B_{a_n,a_1}\rangle^{-1}\langle f_n, B_{a_n,a_1}\rangle
\\&+\ldots+B_{a_n,a_1,\ldots,a_{n-1}}\langle B_{a_n,a_1,\ldots,a_{n-1}}, B_{a_n,a_1,\ldots,a_{n-1}}\rangle^{-1}
\langle f_n, B_{a_n,a_1,\ldots,a_{n-1}}\rangle,
\end{align*}
and \eqref{cyclic} equals
\begin{align*}
&\|B_{a_n}\langle B_{a_n}, B_{a_n}\rangle^{-1}\langle f_n, B_{a_n}\rangle\|^2
+\|B_{a_n,a_1}\langle B_{a_n,a_1}, B_{a_n,a_1}\rangle^{-1}\langle f_n, B_{a_n,a_1}\rangle\|^2
\\&+\ldots+\|B_{a_n,a_1,\ldots,a_{n-1}}\langle B_{a_n,a_1,\ldots,a_{n-1}}, B_{a_n,a_1,\ldots,a_{n-1}}\rangle^{-1}
\langle f_n, B_{a_n,a_1,\ldots,a_{n-1}}\rangle\|^2
\\ \geq&\|B_{a_n}\langle B_{a_n}, B_{a_n}\rangle^{-1}\langle f_n, B_{a_n}\rangle\|^2.
\end{align*}
\end{proof}

\begin{theo}
Subject to the maximum selection principle \eqref{MSP} we have
\begin{align}\label{converge}
\left\|\sum_{n=1}^N B_n\langle B_n, B_n\rangle^{-1}\langle f, B_n\rangle-f\right\|\rightarrow 0\quad (N\rightarrow\infty).
\end{align}
\end{theo}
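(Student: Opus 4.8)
The plan is to follow the classical scheme for proving convergence of adaptive (greedy-type) expansions in Hilbert spaces, adapted to the right $\mathscr{A}_m$-module setting. Write $P_N f := \sum_{n=1}^N B_n\langle B_n, B_n\rangle^{-1}\langle f, B_n\rangle$ for the $N$-th partial sum, so that $f_{N+1} = f - P_N f$ is the standard remainder. The first step is to establish that the terms in the sum are mutually orthogonal and that Bessel-type energy decomposition holds, i.e. $\|f\|^2 = \|P_N f\|^2 + \|f_{N+1}\|^2$ and $\|P_N f\|^2 = \sum_{n=1}^N \|B_n\langle B_n,B_n\rangle^{-1}\langle f,B_n\rangle\|^2$; this follows from the orthogonality of $\{B_n\}$ (Theorem in \S4) together with the Pythagorean identity for the scalar-part inner product. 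In particular $\sum_{n=1}^\infty \|B_n\langle B_n,B_n\rangle^{-1}\langle f,B_n\rangle\|^2 \le \|f\|^2 < \infty$, so the individual term energies tend to $0$.

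Next I would show that $\|f_N\|$ is monotonically decreasing and hence converges to some limit $d \ge 0$; the goal is to prove $d = 0$. Suppose for contradiction $d > 0$. The key is to combine the Maximum Selection Principle with Lemma \ref{ineq1}: at step $n$ the parameter $a_n$ is chosen so that $\|B_n\langle B_n,B_n\rangle^{-1}\langle f,B_n\rangle\|$ is maximal, and by Lemma \ref{ineq1} this quantity dominates $(1-|a|^2)^{m/2}|f_n(a)|$ for every $a \in B^{m+1}$, i.e. the chosen term's energy is at least $\sup_{a\in B^{m+1}}(1-|a|^2)^{m/2}|f_n(a)|$, which is $\sup_{a\in B^{m+1}}|\langle f_n, B_a\rangle|$. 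Since the term energies are summable they go to $0$, hence $\sup_{a}|\langle f_n, B_a\rangle| \to 0$ as $n\to\infty$. Now I would invoke the fact that the normalized Szegő kernels $\{B_a : a\in B^{m+1}\}$ form a ``dictionary'' whose closed linear span is all of $\mathcal{H}^2(B^{m+1})$ (they reproduce point values via \eqref{Cauchy}, so any $h$ orthogonal to all of them vanishes). If $d>0$, then $f_n$ does not converge to $0$; passing to the remainder, one writes $\langle f_n, B_a\rangle = \langle f, B_a\rangle - \langle P_{n-1}f, B_a\rangle$ and argues that $\sup_a|\langle f_n,B_a\rangle|\to 0$ forces, in the limit, $f$ to lie in the closed span of $\{B_n\}_{n=1}^\infty$, contradicting $\|f_n\| = d > 0$.

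Making the last implication precise is the main obstacle. The clean way is the standard ``weak greedy'' argument: from $\|f_n\|^2 - \|f_{n+1}\|^2 = \|B_n\langle B_n,B_n\rangle^{-1}\langle f,B_n\rangle\|^2 \ge \big(\sup_{a}(1-|a|^2)^{m/2}|f_n(a)|\big)^2$, summability of the left side gives $\sup_a(1-|a|^2)^{m/2}|f_n(a)| \to 0$. One then shows that $\{f_n\}$ is a Cauchy-like sequence converging to some $f_\infty$ with $\|f_\infty\| = d$ and $\langle f_\infty, B_a\rangle = 0$ for all $a\in B^{m+1}$ (using that $\langle f_n, B_a\rangle\to 0$ uniformly and that the $B_a$ are uniformly bounded in norm on compact subsets, plus the Bessel bound to control tails). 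By the reproducing property $\langle f_\infty, \partial_a^k S_a\rangle = (\partial_x^k f_\infty)(a)$, the vanishing of all these pairings forces $f_\infty \equiv 0$ on $B^{m+1}$, whence $d = \|f_\infty\| = 0$, contradiction. The only technical care needed is in the non-commutative, non-invertible $\mathscr{A}_m$ setting: all the orthogonality and Pythagorean manipulations must be done on the scalar part ${\rm Sc}\langle\cdot,\cdot\rangle$, and one uses $\|B_n c\|^2 = {\rm Sc}(\overline{c}\langle B_n,B_n\rangle c)$ with $\langle B_n,B_n\rangle = 1$ (since $B_n$ is normalized) to reduce term-energy estimates to honest nonnegative reals, so that Bessel's inequality and the monotonicity of $\|f_n\|$ go through verbatim.

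\medskip
\noindent\textbf{Remark on the case of repeated parameters.} When some $a_n$ has multiplicity greater than one, $B_n$ is defined via the limiting (derivative) procedure of \S4, and $\langle f, B_a\rangle$ is replaced by the corresponding higher-order Szegő-kernel pairing; the argument above is unchanged provided Lemma \ref{ineq1} and Lemma \ref{limit1} are read in that limiting sense, which is how they were stated.
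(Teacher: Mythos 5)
Your proposal is correct and follows essentially the same strategy as the paper: Bessel's inequality to get $g:=\sum_n B_n\langle B_n,B_n\rangle^{-1}\langle f,B_n\rangle$ and $h:=f-g$, then Lemma~\ref{ineq1} together with the Maximum Selection Principle to force $h$ to vanish, using that $\langle h, B_a\rangle=(1-|a|^2)^{m/2}h(a)$.

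The one organizational difference is that the paper picks a single point $a$ where $(1-|a|^2)^{m/2}|h(a)|=\delta>0$ and exhibits a large $N$ at which $|\langle f_N,B_a\rangle|>\delta/2$ while $\|B_N\langle B_N,B_N\rangle^{-1}\langle f,B_N\rangle\|<\delta/2$, contradicting the MSP directly; you instead first deduce $\sup_{a}|\langle f_n,B_a\rangle|\to 0$ from summability of term energies and then pass to the limit to get $\langle h,B_a\rangle\equiv 0$. Both routes are fine, though the ``main obstacle'' you flag is not actually there: $f_n\to h$ in norm is immediate from Bessel (since $f_n=h-r_n$ with $\|r_n\|\to 0$), so no separate Cauchy argument or compactness input is needed, and $\|B_a\|\equiv 1$ makes the uniform passage to the limit in $\langle f_n,B_a\rangle$ automatic.
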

\begin{proof}
From Bessel's inequality we have
$$\sum_{n=1}^\infty\left\|B_n\langle B_n, B_n\rangle^{-1}\langle f, B_n\rangle\right\|^2\leq\|f\|^2,$$
which implies that there exists a function $g\in\mathcal{H}^2(B^{m+1})$ such that
$$\sum_{n=1}^\infty B_n\langle B_n, B_n\rangle^{-1}\langle f, B_n\rangle=g$$
holds in the sense of $\mathcal{H}^2(B^{m+1})$. If \eqref{converge} is not true, then
$$h:=f-g\neq0,$$
so there exists a point $a\in B^{m+1}\setminus\bigcup_{i=1}^\infty\{a_i\}$
such that
$$\|B_{a}\langle B_{a}, B_{a}\rangle^{-1}\langle h, B_{a}\rangle\|=
|\langle h, B_{a}\rangle|=(1-|a|^2)^{\frac{m}{2}}|h(a)|=\delta>0.$$
Let
$$f_N=f-\sum_{n=1}^{N-1}B_n\langle B_n, B_n\rangle^{-1}\langle f, B_n\rangle,
\quad r_N=-\sum_{n=N}^\infty B_n\langle B_n, B_n\rangle^{-1}\langle f, B_n\rangle.$$
When $N$ is large enough,
\begin{align}
|\langle r_N, B_a\rangle|&=\|B_{a}\langle B_{a}, B_{a}\rangle^{-1}\langle r_N, B_{a}\rangle\|\nonumber
\\&\leq\|r_N\|=\left(\sum_{n=N}^\infty\|B_n\langle B_n, B_n\rangle^{-1}\langle f, B_n\rangle\|^2\right)^{1/2}<\delta/2.\label{ineq2}
\end{align}
So
$$|\langle f_N, B_a\rangle|=|\langle h-r_N, B_a\rangle|\geq
|\langle h, B_a\rangle|-|\langle r_N, B_a\rangle|>\delta/2.$$
By Lemma \ref{ineq1} we get
\begin{align*}
&\|B_{a_1,\ldots,a_{N-1},a}\langle B_{a_1,\ldots,a_{N-1},a}, B_{a_1,\ldots,a_{N-1},a}\rangle^{-1}\langle f, B_{a_1,\ldots,a_{N-1},a}\rangle\|
\\ \geq&\|B_{a}\langle B_{a}, B_{a}\rangle^{-1}\langle f_N, B_{a}\rangle\|=|\langle f_N, B_{a}\rangle|>\delta/2.
\end{align*}
On the other hand, from \eqref{ineq2} we know that
\begin{align*}
&\|B_{a_1,\ldots,a_{N-1},a_N}\langle B_{a_1,\ldots,a_{N-1},a_N}, B_{a_1,\ldots,a_{N-1},a_N}\rangle^{-1}\langle f, B_{a_1,\ldots,a_{N-1},a_N}\rangle\|
\\=&\|B_N\langle B_N, B_N\rangle^{-1}\langle f, B_N\rangle\|<\delta/2.
\end{align*}
Therefore we arrive at
\begin{align*}
&\|B_{a_1,\ldots,a_{N-1},a_N}\langle B_{a_1,\ldots,a_{N-1},a_N}, B_{a_1,\ldots,a_{N-1},a_N}\rangle^{-1}\langle f, B_{a_1,\ldots,a_{N-1},a_N}\rangle\|
\\<&\|B_{a_1,\ldots,a_{N-1},a}\langle B_{a_1,\ldots,a_{N-1},a}, B_{a_1,\ldots,a_{N-1},a}\rangle^{-1}\langle f, B_{a_1,\ldots,a_{N-1},a}\rangle\|,
\end{align*}
which contradicts with the maximum selection principle that we should not have chosen $a_N$ at the $N$-th step.
\end{proof}

Next we consider a convergence rate for adaptive Clifford TM system approximation. To deal with this, as in \cite{DT} we introduce a subclass of $\mathcal{H}^2(B^{m+1})$:
$$\mathcal{H}^2(B^{m+1}, M):=\left\{f\in\mathcal{H}^2(B^{m+1}): f=\sum_{k=1}^\infty B_{b_k}c_k~\mbox{with}~
\sum_{k=1}^\infty|c_k|\leq M<\infty\right\}.$$
We also need the following lemma.
\begin{lem}[\cite{DT}]\label{DT}
Let $\{d_n\}_{n=l}^\infty$ be a sequence of non-negative numbers
satisfying the inequalities
$$d_1\leq A,\quad d_{n+1}\leq d_n(1-d_n/A),\quad n=1,2,\ldots.$$
Then we have for each $n$
$$d_n\leq A/n.$$
\end{lem}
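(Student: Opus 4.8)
The plan is to prove the estimate $d_n\le A/n$ by induction on $n$, exploiting the recursive inequality $d_{n+1}\le d_n(1-d_n/A)$ together with the fact that the map $t\mapsto t(1-t/A)$ is increasing on $[0,A/2]$. The base case $n=1$ is immediate from the hypothesis $d_1\le A$. For the inductive step, assume $d_n\le A/n$; I want to conclude $d_{n+1}\le A/(n+1)$.

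First I would dispose of a small technical point: the function $\varphi(t)=t(1-t/A)=t-t^2/A$ is increasing precisely on the interval $[0,A/2]$, and $A/n\le A/2$ holds for all $n\ge 2$, so for $n\ge 2$ the inductive hypothesis $d_n\le A/n$ gives $d_{n+1}\le\varphi(d_n)\le\varphi(A/n)$. For $n=1$ one checks directly that $d_2\le d_1(1-d_1/A)\le A/4\le A/2$ (the maximum of $\varphi$ on $[0,A]$ is $A/4$, attained at $t=A/2$), so in fact $d_n\le A/2$ for every $n\ge 2$ and the monotonicity argument is legitimate at each step. Then the key computation is simply
\begin{align*}
\varphi\!\left(\frac{A}{n}\right)=\frac{A}{n}\left(1-\frac{1}{n}\right)=\frac{A(n-1)}{n^2}\le\frac{A}{n+1},
\end{align*}
where the last inequality is equivalent to $(n-1)(n+1)\le n^2$, i.e. $n^2-1\le n^2$, which is true. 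Combining, $d_{n+1}\le\varphi(d_n)\le\varphi(A/n)\le A/(n+1)$, completing the induction.

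I do not expect any genuine obstacle here; the only place requiring a moment's care is ensuring that $d_n$ actually lands in the region $[0,A/2]$ where $\varphi$ is monotone, so that the inductive hypothesis can be fed into $\varphi$ in an order-preserving way. Once the bound $d_n\le A/2$ for $n\ge 2$ is established (which follows from $d_2\le A/4$ and a trivial induction, since $\varphi$ maps $[0,A/2]$ into itself), the rest is the elementary algebraic inequality $A(n-1)/n^2\le A/(n+1)$ displayed above. This is essentially the argument in \cite{DT}, and I would present it in that streamlined form.
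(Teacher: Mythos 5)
The paper itself does not prove this lemma; it simply cites it from DeVore--Temlyakov \cite{DT}, so there is no in-paper proof to compare against. Your inductive argument is correct: the monotonicity of $\varphi(t)=t(1-t/A)$ on $[0,A/2]$, the observation that $d_2\le A/4$ so all subsequent terms stay in the monotone region, and the algebraic step $A(n-1)/n^2\le A/(n+1)$ are all sound and close the induction without gaps. For comparison, the argument in \cite{DT} (and the one usually reproduced in the greedy-approximation literature) instead passes to reciprocals: from $d_{n+1}\le d_n(1-d_n/A)$ one gets, when $d_{n+1}>0$,
\begin{equation*}
\frac{1}{d_{n+1}}\ \ge\ \frac{1}{d_n}\cdot\frac{1}{1-d_n/A}\ \ge\ \frac{1}{d_n}\left(1+\frac{d_n}{A}\right)\ =\ \frac{1}{d_n}+\frac{1}{A},
\end{equation*}
so telescoping gives $1/d_n\ge 1/d_1+(n-1)/A\ge n/A$, i.e. $d_n\le A/n$ (the degenerate case $d_n=0$ being trivial). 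That route avoids the monotonicity discussion entirely and is a touch shorter, but it buys nothing essential: both proofs are elementary and correct. If you present yours, you might streamline the base case by noting directly that $\varphi$ maps $[0,A]$ into $[0,A/4]\subset[0,A/2]$, so $d_n\in[0,A/2]$ automatically for all $n\ge2$.
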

Now we can prove a convergence rate result.
\begin{theo}
If $f\in\mathcal{H}^2(B^{m+1}, M)$, then
$$\|f_N\|\leq \frac{2^{\frac{m}{2}}M}{\sqrt{N}},$$
where $f_N$ is the residue produced from the adaptive TM system approximation of $f$ at the $N$-th step.
\end{theo}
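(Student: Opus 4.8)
The plan is to adapt the classical Jones/DeVore--Temlyakov argument for greedy-type algorithms to the present Clifford setting, using the two facts already in hand: the maximum selection principle together with Lemma~\ref{ineq1}, which lower-bounds the energy extracted at each step by $(1-|a_N|^2)^{m/2}|f_N(a_N)|=|\langle f_N,B_{a_N}\rangle|$, and the Pythagorean-type identity $\|f_{N+1}\|^2=\|f_N\|^2-\|B_N\langle B_N,B_N\rangle^{-1}\langle f,B_N\rangle\|^2$, which follows from the orthogonality of $\{B_n\}$ and the definition of $f_N$. The goal is to show $d_N:=\|f_N\|^2$ satisfies a recursion of the form $d_{N+1}\le d_N(1-d_N/A)$ with $A=2^m M^2$, so that Lemma~\ref{DT} yields $d_N\le 2^m M^2/N$, i.e. $\|f_N\|\le 2^{m/2}M/\sqrt N$.

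First I would fix $f=\sum_{k=1}^\infty B_{b_k}c_k$ with $\sum_k|c_k|\le M$ and, at step $N$, estimate $\|f_N\|^2={\rm Sc}\langle f_N,f_N\rangle={\rm Sc}\langle f_N,f\rangle$ (using that $f_N$ is orthogonal to $B_1,\dots,B_{N-1}$, hence $\langle f_N,f-f_N\rangle$ has zero scalar part after expanding, or more directly $\langle f_N, \sum_{l<N}B_l\langle B_l,B_l\rangle^{-1}\langle f,B_l\rangle\rangle=0$). Expanding $f$ in its $B_{b_k}$-series and using linearity of the inner product in the first slot gives
\begin{align*}
\|f_N\|^2={\rm Sc}\langle f_N,f\rangle={\rm Sc}\sum_{k=1}^\infty\langle f_N,B_{b_k}c_k\rangle
=\sum_{k=1}^\infty{\rm Sc}\big(\overline{c_k}\,\langle f_N,B_{b_k}\rangle\big),
\end{align*}
so that $\|f_N\|^2\le\sum_k|c_k|\,|\langle f_N,B_{b_k}\rangle|\le M\sup_{a\in B^{m+1}}|\langle f_N,B_a\rangle|$. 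Now Lemma~\ref{ineq1} (applied with the configuration $a_1,\dots,a_{N-1}$ already chosen and an arbitrary $a$ in the last slot) together with the maximum selection principle shows that the energy extracted at the $N$-th step, $E_N:=\|B_N\langle B_N,B_N\rangle^{-1}\langle f,B_N\rangle\|$, dominates $|\langle f_N,B_a\rangle|$ for every $a$; hence $\sup_a|\langle f_N,B_a\rangle|\le E_N$ and therefore $\|f_N\|^2\le M E_N$, i.e. $E_N\ge \|f_N\|^2/M$.

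Combining this with the Pythagorean identity $d_{N+1}=d_N-E_N^2$ gives
\begin{align*}
d_{N+1}=d_N-E_N^2\le d_N-\frac{d_N^2}{M^2}=d_N\Big(1-\frac{d_N}{M^2}\Big).
\end{align*}
This is not yet quite the hypothesis of Lemma~\ref{DT} with $A=2^mM^2$, but since $d_1=\|f\|^2=\|\sum B_{b_k}c_k\|^2\le(\sum|c_k|)^2\le M^2\le 2^mM^2=A$ and $d_N(1-d_N/M^2)\le d_N(1-d_N/A)$ whenever $d_N\le M^2$ (which persists by induction since the sequence is non-increasing), the sequence satisfies $d_1\le A$ and $d_{N+1}\le d_N(1-d_N/A)$; Lemma~\ref{DT} then gives $d_N\le A/N=2^mM^2/N$, whence $\|f_N\|\le 2^{m/2}M/\sqrt N$. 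The main obstacle, and the step that deserves the most care, is the non-commutative/non-invertible bookkeeping in the two bounds $\|f_N\|^2\le M\sup_a|\langle f_N,B_a\rangle|$ and $E_N\ge\|f_N\|^2/M$: one must make sure that taking scalar parts is consistent with the Clifford-valued coefficients $c_k$ and $\langle f,B_n\rangle$, that Lemma~\ref{ineq1} is being invoked with the correct ordering of parameters, and that the reduction to the real recursion of Lemma~\ref{DT} does not lose the constant. Where $m(a_N)>1$ the quantities are interpreted in the limiting (higher-order Szeg\H{o} kernel) sense, and one checks that all the estimates above pass to that limit, exactly as in the discussion preceding Lemma~\ref{limit1}.
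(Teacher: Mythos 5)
Your proof follows the same skeleton as the paper's: establish $\|f_N\|^2\le \mathrm{const}\cdot M\cdot\sup_a|\langle f_N,B_a\rangle|$, combine with Lemma~\ref{ineq1} and the maximum selection principle to lower-bound the extracted energy $E_N$, feed the resulting recursion $d_{N+1}\le d_N(1-d_N/A)$ into Lemma~\ref{DT}, and conclude. The one substantive difference is a sharpening: at the step $\|f_N\|^2={\rm Sc}\langle f_N,f\rangle=\sum_k{\rm Sc}(\overline{c_k}\langle f_N,B_{b_k}\rangle)$ you invoke the scalar-part Cauchy--Schwarz estimate $|{\rm Sc}(xy)|\le|x||y|$ to get $\|f_N\|^2\le M\sup_a|\langle f_N,B_a\rangle|$, whereas the paper bounds $|{\rm Sc}\langle f_N,f\rangle|\le|\langle f_N,f\rangle|\le 2^{m/2}M\sup_a|\langle f_N,B_a\rangle|$ via $|xy|\le 2^{m/2}|x||y|$. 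Your version would in fact give the recursion with $A=M^2$ and hence the stronger conclusion $\|f_N\|\le M/\sqrt N$; you then deliberately loosen it to $A=2^mM^2$ to match the stated constant, which is fine but slightly undersells your own argument. One step does need tightening: the chain $\|\sum_kB_{b_k}c_k\|\le\sum_k|c_k|\le M$ requires $\|B_{b_k}c_k\|\le|c_k|$, which does \emph{not} follow from the module-norm inequality $\|f\lambda\|\le 2^{m/2}|\lambda|\|f\|$ of Proposition~\ref{IPS2}. It is nevertheless true here because $\langle S_a,S_a\rangle=S_a(a)=(1-|a|^2)^{-m}$ is a positive real scalar, so $\langle B_{b_k},B_{b_k}\rangle=1$ and $\|B_{b_k}c_k\|^2={\rm Sc}(\overline{c_k}\,1\,c_k)=|c_k|^2$; you should either state this or simply fall back on $d_1=\|f\|^2\le 2^mM^2=A$, which Proposition~\ref{IPS2} gives directly and which is all your recursion with $A=2^mM^2$ needs.
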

\begin{proof}
First, by Proposition \ref{IPS2} we have
$$\|f_1\|=\|f\|\leq\sum_{k=1}^\infty 2^{\frac{m}{2}}|c_k|\cdot\|B_{b_k}\|=\sum_{k=1}^\infty 2^{\frac{m}{2}}|c_k|\leq 2^{\frac{m}{2}}M,$$
and
\begin{align}
\|f_N\|^2&=|{\rm Sc}\langle f_N, f_N\rangle|\nonumber
\\&=|{\rm Sc}\langle f_N, f\rangle|\nonumber
\\&=\left|{\rm Sc}\left\langle f_N, \sum_{k=1}^\infty B_{b_k}c_k\right\rangle\right|\nonumber
\\&\leq\left|\left\langle f_N, \sum_{k=1}^\infty B_{b_k}c_k\right\rangle\right|\nonumber
\\&\leq 2^{\frac{m}{2}}M\sup_{k\geq 1}|\langle f_N, B_{b_k}\rangle|\nonumber
\\&\leq 2^{\frac{m}{2}}M\sup_{a\in B^{m+1}}|\langle f_N, B_a\rangle|\label{ineq3}.
\end{align}
Secondly, by Lemma \ref{ineq1} we get
\begin{align}
&\|B_N\langle B_N, B_N\rangle^{-1}\langle f, B_N\rangle\|\nonumber
\\=&\sup_{a\in B^{m+1}}\|B_{a_1,\ldots,a_{N-1},a}\langle B_{a_1,\ldots,a_{N-1},a}, B_{a_1,\ldots,a_{N-1},a}\rangle^{-1}
\langle f, B_{a_1,\ldots,a_{N-1},a}\rangle\|\nonumber
\\ \geq&\sup_{a\in B^{m+1}}|\langle f_N, B_{a}\rangle|\label{ineq4}.
\end{align}
So, from \eqref{ineq3} and \eqref{ineq4} we obtain
\begin{align*}
\|f_{N+1}\|^2&=\|f_N-B_N\langle B_N, B_N\rangle^{-1}\langle f, B_N\rangle\|^2
\\&=\|f_N\|^2-\|B_N\langle B_N, B_N\rangle^{-1}\langle f, B_N\rangle\|^2
\\&\leq\|f_N\|^2\left(1-\frac{\|f_N\|^2}{2^m M^2}\right).
\end{align*}
By Lemma \ref{DT} we conclude the proof.
\end{proof}

\begin{rem}\normalfont
Let $f\in L^2(S^m)$ (square integrable on $S^m$), where $f$ in not necessarily monogenic.
To get the adaptive approximation of $f$, without loss of generality we assume that $f$ is real-valued, and take
$$F(x):=T(f)(x):=\int_{\omega\in S^m}S(x,\omega)f(\omega)dS,\quad |x|<1,$$
where
$$S(x,\omega)=P(x,\omega)+Q(x,\omega)$$
is the monogenic Schwarz kernel,
$$P(x,\omega)=\frac{1}{\omega_m}\frac{1-|x|^2}{|x-\omega|^{m+1}}$$ is
the Poisson kernel and
\begin{align*}
Q(x,\omega)&={\rm NSc}\left(\int_0^1t^{m-1}(\overline{D}P)(tx,\omega)xdt\right)
\\&=\left(\frac{1}{\omega_m}\int_0^1\frac{(m+1)t^{m-1}(1-t^2|x|^2)}
{|tx-\omega|^{m+3}}dt\right){\rm NSc}(\overline{\omega}x)
\end{align*}
is the Cauchy-type harmonic conjugate of $P(x,\omega)$ on the unit
sphere $S^m$, which can be computed out explicitly with an expression in
elementary functions. As a consequence of boundedness of Hilbert transform on the sphere
(\cite{QY}), $T$ is a bounded operator from $L^2(S^m)$ to
$\mathcal{H}^2(B^{m+1})$. So $F\in\mathcal{H}^2(B^{m+1})$. The adaptive approximation of $f$ can be obtained by
the adaptive TM system approximation of $F$ through the relation
$$\lim_{r\rightarrow1^-}{\rm Sc}(F(r\xi))=f(\xi)$$
for a.e. $\xi\in S^m$.
\end{rem}

\begin{rem}\normalfont
The above theory can be similarly formulated in the context of the half space $\mathbb{R}^{m+1}_+$.
While for a real-valued function $f\in L^2(\mathbb{R}^m)$ we consider the Cauchy integral of
$f$:
$$F(x)=C(f):=\frac{-1}{\omega_m}\int_{\mathbb{R}^m}
\frac{\overline{\underline{y}-x}}{|\underline{y}-x|^{m+1}}
f(\underline{y})d\underline{y},\quad x\in \mathbb{R}^{m+1}_+,$$
where $\underline{y}=y_1e_1+\ldots+y_me_m$, $d\underline{y}=dy_1\cdots dy_m$.
We have
$F\in\mathcal{H}^2(\mathbb{R}^{m+1}_{+})$, and by
Sokhotsky--Plemelj formula we get
$$\lim_{x_0\rightarrow 0^{+}}F(x_0+\underline{x})=\frac{1}{2}f(\underline{x})
+\frac{1}{2}H(f)(\underline{x}),$$
where $H(f)=\sum_{i=1}^me_iR_i(f),$ and
$$R_i(f)(\underline{x}):=\frac{2}{\omega_m}\mbox{p.v.} \int_{\mathbb{R}^m}
\frac{y_i-x_i}{|\underline{y}-\underline{x}|^{m+1}}
f(\underline{y})d\underline{y}$$ is the $i$-th ($1\leq i\leq m$)
Riesz transform of $f$. The adaptive approximation of $f$ is then obtained by
the adaptive TM system approximation of $F$ through
$$2\lim_{x_0\rightarrow 0^{+}}\mbox{Sc}(F(x_0+\underline{x}))=f(\underline{x})$$
for a.e. $\underline{x}\in\mathbb{R}^m$.
\end{rem}

\vskip 0.8cm \noindent{\Large\textbf{Acknowledgements}}

\vskip 0.3cm \noindent
Jinxun Wang was supported by the National Natural Science Foundation of China (No. 11701105). Tao Qian was supported by The Science and Technology Development Fund, Macau SAR (No. 0123/2018/A3).

\end{document}